\documentclass{amsart}
\usepackage[english]{babel}
\usepackage[T1]{fontenc}
\usepackage[color,matrix,arrow]{xy}
\usepackage{tikz}
\usetikzlibrary{decorations.markings,arrows,automata,backgrounds}
\usepackage{amsmath,amssymb,amsthm,amsfonts,amscd,mathrsfs,tikz-cd}
\usepackage{listings}
\usepackage{color}
\tikzset{->-/.style={decoration={markings, mark=at position 0.5 with 
    {\arrow{triangle 60}}}, postaction={decorate}}}
\usepackage{algpseudocode}
\usepackage{comment}
\usepackage{booktabs}
\usetikzlibrary{calc}
\usepackage{graphicx}
\usepackage[colorinlistoftodos]{todonotes}
\usepackage[colorlinks=true, allcolors=blue]{hyperref}
\usepackage{float}
\usepackage{mathtools}
\usetikzlibrary{shapes.geometric}

\theoremstyle{definition}

\newcommand{\st}{\mathrm{st}}
\newcommand{\lk}{\mathrm{lk}}
\newcommand{\ZZ}{\mathbb{Z}}
\newcommand{\M}{\mathcal{M}}
\newcommand{\GM}{\mathcal{GM}}
\newcommand{\HH}{\mathcal{H}}

\newtheorem{thm}{Theorem}

\newtheorem{lem}[thm]   {Lemma}
\newtheorem{cor}[thm]   {Corollary}
\newtheorem{rem}[thm]   {Remark}
\newtheorem{defn}[thm]  {Definition}

\newtheorem{prop}[thm]  {Proposition}
\newtheorem{conj}[thm]  {Conjecture}

\newcounter{foo}  \Alph{foo}

\newenvironment{example}
{\medskip\par\noindent{\sc Example}\ }
{\par}

\title[discrete Morse matchings of the $n$-simplex]{The complex of discrete Morse matchings of the $n$-simplex: homotopy types and structural results}
\author[N.A. Scoville]{Nicholas A. Scoville}

\address{Department of Mathematics and Computer Science, Ursinus College, Collegeville PA 19426}

\email{nscoville@ursinus.edu}

\keywords{discrete Morse theory, complex of discrete Morse matchings,  optimal matching, homotopy type, acyclic matching, $n$-simplex}
\subjclass[2020]{Primary 05E45, 57Q70; Secondary 55P10, 05C70}

\begin{document}

\begin{abstract}
The complex of discrete Morse matchings $\M(K)$, introduced by Chari and  Joswig, is a simplicial complex whose simplices are the acyclic matchings  on the Hasse diagram of $K$. Its homotopy type is known in only a handful of cases. In this paper, we compute the homotopy types of $\M(\Delta^3)$ and $\M(\partial\Delta^3)$, the corresponding pure complexes $\M_{P}(\Delta^3) \simeq \M_{P}(\partial\Delta^3)$, and the generalized complex of discrete Morse matchings $\GM(\Delta^3) \simeq \GM(\partial\Delta^3)$. For general $n$ we prove the identity  $f(n) = (n+1) \cdot |\text{top-dimensional facets of } \M(\Delta^n_{(n-2)})|$, reducing the enumeration of optimal matchings on $\Delta^n$ to an enumeration on its $(n-2)$-skeleton, and we show that the inclusion $\M(K) \hookrightarrow \M(CK)$ is null-homotopic for any cone. We also compute the $f$-vector of $\M(\Delta^4)$, whose top entry $f(4) = 380{,}125$ is the number of optimal discrete Morse matchings on $\Delta^4$. We conclude with two conjectures extending the $\M_{P}$ and $\GM$ equivalences to all $n$.
\end{abstract}

\maketitle

\section{Introduction}

Let $K$ be a finite simplicial complex. The complex of discrete Morse matchings\footnote{In the literature, this has been called both the Morse complex and the complex of discrete Morse functions. The former name causes confusion with the name of the chain complex induced by the critical simplices of a discrete Morse function on $K$, while in the latter case, we agree with J.\ Br\"uggemann who points out that this construction refers ``to a complex whose simplices correspond to [discrete] Morse matchings rather than [discrete] Morse functions'' \cite[p. 26]{bruggemann2026}.} $\M(K)$ was introduced by Chari and Joswig \cite{CJ-2005}. We may view $\M(K)$ as a parameter space of gradient vector fields on $K$, or equivalently of discrete Morse matchings or acyclic matchings in the sense of \cite{F-95,F-02,CJ-2005}. While not every matching yields a substantive reduction, each simplex of $\M(K)$ encodes a consistent set of cancellations and hence a choice of simplifying $K$ up to simple homotopy. The topology of $\M(K)$ reflects how flexible these choices are. For example, $i$-connectivity measures the extent to which families of matchings can be deformed through matchings, as studied in \cite{scoville2020higher}, in analogy with Cerf-theoretic phenomena for spaces of smooth Morse functions \cite{cerf1970,bruggemann2026}. Simplices of maximum dimension correspond to discrete Morse matchings that minimize the total number of critical simplices, and we call such matchings \textbf{optimal}. In favorable cases, one can achieve an optimal matching with the fewest critical simplices allowed by the discrete Morse inequalities \cite[Corollary 3.7]{F-95}. Optimal and near-optimal matchings have important computational applications since discrete Morse reductions can substantially reduce complexes prior to homology or persistent homology computations in applied topology and topological data analysis \cite{Bauer2021,robins2011}.

Despite the central role of discrete Morse theory in combinatorial topology, the homotopy type of $\M(K)$ is known only in a small number of cases. Kozlov computed the homotopy type of $\M(K)$ for paths, cycles, and the complete graphs in \cite{Kozlov99}; the homotopy type has been determined for several other families of graphs and special cases in \cite{donovan_lin_scoville_2022,DonovanScoville2023}. The automorphism group of $\M(K)$ has been computed in terms of the automorphism group of $K$ in \cite{LinSco19}, and Capitelli and Minian have shown that $\M(K)$ encodes the isomorphism type of $K$ in \cite{CM-17}.

A natural next test case is the $n$-simplex $\Delta^n$ and its boundary $\partial\Delta^n$. The homotopy type of $\M(\Delta^2)$ is known from \cite{CJ-2005}, and even here very little is determined for $n \geq 3$. The principal aim of this paper is to determine the homotopy types of $\M(\Delta^3)$, $\M(\partial\Delta^3)$, and their associated pure and generalized complexes. Along the way we develop structural tools that apply for all $n$ and present computational data for $n = 4$.

Our main results are for $n = 3$. Combining direct homology computations with the framework of Bravo and Camarena in \cite{BravoCamarena24}, we show in Section \ref{sec:n3} that
$$
\M(\Delta^3) \simeq \bigvee^{99} S^4,
\qquad
\M(\partial\Delta^3) \simeq \bigvee^{21} S^3 \vee \bigvee^{24} S^4,
$$
and that the pure complexes in both cases share a single homotopy type in $\M_{P}(\Delta^3) \simeq \M_{P}(\partial\Delta^3) \simeq \bigvee^{81} S^3$. These appear as Theorems \ref{thm: h.t. of delta3}, \ref{thm: h.t. of partial delta3}, and \ref{thm: h.t. of pure}, respectively. 

In addition, we develop structural results that apply for all $n$. First, we construct an explicit bijection between the top-dimensional facets of $\M(\Delta^n)$ and those of $\M(\partial\Delta^n)$ in Proposition \ref{prop: top dim bijection}. This implies that the two complexes admit the same number of optimal matchings. Second, we record a ``layer count'' for optimal matchings in Lemma \ref{lem: counting pairs} and use it to reduce their enumeration to an enumeration on the $(n-2)$-skeleton. This defines a surjection from optimal matchings on $\Delta^n$ to optimal matchings on $\Delta^n_{(n-2)}$ with uniform fiber size $n+1$, yielding the identity
$$
f(n) = (n+1) \cdot \bigl|\text{top-dimensional facets of } \M(\Delta^n_{(n-2)})\bigr|
$$
where $f(n)$ is the number of optimal matchings on $\Delta^n$ (Theorem \ref{thm: morse bijection}). Third, we prove in Theorem \ref{thm: null-homotopy} that for any finite simplicial complex $K$, the inclusion $\M(K) \hookrightarrow \M(CK)$ into the complex of discrete Morse matchings of the cone is null-homotopic. The proof involves a one-step contiguity using a single explicit Hasse edge of the cone, and when applied to the decomposition $\Delta^n = v * \Delta^{n-1}$ it splits the long exact sequence of the pair $(\M(\Delta^n), \M(\Delta^{n-1}))$.

For $n = 4$ we present partial computational evidence. Exhaustive parallel enumeration gives the full $f$-vector of $\M(\Delta^4)$; its top entry $f(4) = 380{,}125$ agrees with the prediction of our restriction theorem via the identity $f(4) = 5 \cdot 76{,}025$. The reduced Euler characteristic $\widetilde{\chi}(\M(\Delta^4)) = 212{,}456$ is positive, which suffices to rule out any homotopy equivalence of $\M(\Delta^4)$ with a wedge of spheres in a single odd dimension. The full integral homology of $\M(\Delta^4)$ remains out of reach, as the largest boundary matrix has dimensions on the order of $10^9$, beyond what our present computational setup can handle.

We close the paper with a comparison to the generalized complex of discrete Morse matchings $\GM(K)$ of \cite{scoville2020higher}, which drops the acyclicity condition and is the simplicial complex of all matchings on the Hasse diagram of $K$. Unlike for $\M$, where $\M(\Delta^3) \not\simeq \M(\partial\Delta^3)$, we show in Section \ref{sec: GM} that $\GM(\Delta^3) \simeq \GM(\partial\Delta^3) \simeq \bigvee^{39} S^4$, and we conjecture that $\GM(\Delta^n) \simeq \GM(\partial\Delta^n)$ for all $n \geq 2$ in Conjecture \ref{conj:GM}. Together with the conjectured pure complex equivalence $\M_{P}(\Delta^n) \simeq \M_{P}(\partial\Delta^n)$, this raises the question of precisely which additional conditions on a matching cause the distinction between a simplex and its boundary to collapse at the level of homotopy type.

All computational claims in this paper have been independently verified by the software accompanying it.\footnote{Available at
\url{https://github.com/nscoville/morse-complex-verification}.}

\section{Background}

Let $K$ be an abstract, finite simplicial complex and let $K_{(i)}$ denote the $i$-skeleton of $K$. 

\subsection{Discrete Morse theory}

Our reference for general background in discrete Morse theory is \cite{KnudsonBook, KozlovDMTbook, F-02, scoville19}.  

\begin{defn}
Let $K$ be a simplicial complex, and let  $\sigma^{(p)}$ denote a simplex of dimension $p$.  A \textbf{discrete vector field} $V$ on $K$ is any non-empty set $V$ consisting of pairs of simplices $(\sigma^{(p)}, \tau^{(p+1)})$ with $\sigma^{(p)}\subseteq \tau^{(p+1)}$ where each simplex of $K$ is in at most one pair. Any pair in $(\sigma,\tau)\in V$ is called a \textbf{regular pair}. Any simplex in $K$ which is not in $V$ is called \textbf{critical}. We denote by $m_i^V=m_i$ the number of critical simplices of $V$ of dimension $i$. 
\end{defn}

\begin{defn}\label{defn: gvf}
Let $V$ be a discrete vector field on a simplicial complex $K$.  A \textbf{$V$-path} or \textbf{gradient path}  is a sequence of simplices 
$$
\alpha^{(p)}_0, \beta^{(p+1)}_0, \alpha^{(p)}_1, \beta^{(p+1)}_1, \alpha^{(p)}_2\ldots , \beta^{(p+1)}_{k-1}, \alpha^{(p)}_{k}
$$ 
of $K$ such that $(\alpha^{(p)}_i,\beta^{(p+1)}_i)\in V$ and $\beta^{(p+1)}_i>\alpha_{i+1}^{(p)}\neq \alpha_{i}^{(p)}$ for $0\leq i\leq k-1$. If $k\neq 0$, then the $V$-path is called  \textbf{non-trivial.}  A $V$-path is said to be  \textbf{closed} if $\alpha_{k}^{(p)}=\alpha_0^{(p)}$.  A discrete vector field $V$ which contains no  non-trivial closed $V$-paths is called a \textbf{gradient vector field}. 
\end{defn}
    
If the gradient vector field consists of only a single element, we say it is a \textbf{primitive} gradient vector field.

\subsection{The complex of discrete Morse matchings}

We now introduce the central object of our study. There are two equivalent formulations which we will use interchangeably throughout the paper.

\begin{defn}\label{MorseComplexDef2}
The \textbf{complex of discrete Morse matchings} of $K$, denoted $\M(K)$, is the simplicial complex whose vertices are the non-empty primitive gradient vector fields on $K$ and whose $k$-simplices are the gradient vector fields on $K$ consisting of $k+1$ regular pairs. A gradient vector field $f = \{f_0, \ldots, f_k\}$ is identified with the simplex spanned by its primitive subfields $f_i \leq f$, $0 \leq i \leq k$.

The \textbf{complex of pure discrete Morse matchings} of $K$, denoted $\M_{P}(K)$, is the subcomplex of $\M(K)$ generated by the non-empty
gradient vector fields of maximum cardinality; equivalently, it is the subcomplex generated by the simplices of $\M(K)$ of top dimension.
\end{defn}

\begin{rem}
We require gradient vector fields to be non-empty in Definition \ref{MorseComplexDef2}. The reason we do so is that the empty gradient vector field is compatible with every other gradient vector field, so admitting it would make $\M(K)$ a cone on itself and force $\M(K)$ to be contractible for every $K$.
\end{rem}

An equivalent, more combinatorial description proceeds via the Hasse diagram. The \textbf{Hasse diagram} of $K$, denoted $\HH(K)$, is the directed graph whose vertices are the simplices of $K$, with a directed edge $\sigma \to \tau$ whenever $\sigma \subsetneq \tau$ and $\dim(\tau) = \dim(\sigma) + 1$. A \textbf{matching} on $\HH(K)$ is a set $W$ of edges of $\HH(K)$ no two of which share a vertex. Given such a matching $W$, let $\HH_W(K)$ denote the directed graph obtained from $\HH(K)$ by reversing the orientation of every edge in $W$. The matching $W$ is called an \textbf{acyclic matching} or a \textbf{discrete Morse matching} if $\HH_W(K)$ contains no directed cycle, and $\HH_W(K)$ is the \textbf{modified Hasse diagram.}

To connect the two perspectives, an edge $\sigma \to \tau$ of $\HH(K)$ corresponds to a regular pair $(\sigma, \tau)$, a matching on $\HH(K)$ corresponds to a discrete vector field, and the acyclicity condition on $\HH_W(K)$ is exactly the absence of non-trivial closed $V$-paths, i.e., a gradient vector field. In particular, $\M(K)$ is equivalently described as the simplicial complex whose vertices are the edges of $\HH(K)$ and whose simplices are the acyclic matchings on $\HH(K)$. We will move freely between ``gradient vector field'' and ``acyclic matching,'' and between ``regular pair $(\sigma, \tau)$'' and ``edge of Hasse diagram $\sigma \to \tau$'' and ``optimal discrete Morse matching'' and ``maximum acyclic matching'' according to which formulation is most convenient.

\begin{example}\label{ex: delta2}
\begin{enumerate}
\item[(a)] We compute $\M(\partial\Delta^2)$ by direct construction. With $a, b, c$ the vertices of $\partial\Delta^2$, one checks that $\M(\partial\Delta^2)$ is the $1$-dimensional simplicial complex shown below:

$$
\begin{tikzpicture}[scale=1]
\node[inner sep=1pt, circle, fill=black](a) at (0,2) {};
\node[inner sep=1pt, circle, fill=black](b) at (1,0) {};
\node[inner sep=1pt, circle, fill=black](c) at (-1,0) {};
\node[inner sep=1pt, circle, fill=black](d) at (0,-1) {};
\node[inner sep=1pt, circle, fill=black](e) at (1,-3) {};
\node[inner sep=1pt, circle, fill=black](f) at (-1,-3) {};

\draw[-]  (a)--(b);
\draw[-]  (a)--(c);
\draw[-]  (a)--(d);
\draw [white, line width=1.5mm] (b) -- (c) node[midway, left] {};
\draw[-]  (b)--(c);

\draw[-]  (d)--(e);
\draw[-]  (d)--(f);
\draw[-]  (e)--(f);

\draw[-]  (b)--(e);
\draw[-]  (c)--(f);

\node[anchor = east]  at (a) {\small{$(a,ab)$}};
\node[anchor =  west]  at (b) {\small{$(b,bc)$}};
\node[anchor = east]  at (c) {\small{$(c,ca)$}};
\node[anchor = east]  at (d) {\small{$(c,bc)$}};
\node[anchor =  west]  at (e) {\small{$(a,ac)$}};
\node[anchor = east]  at (f) {\small{$(b,ab)$}};
\end{tikzpicture}
$$
This is the $1$-skeleton of a triangular prism, a graph on $6$ vertices with $9$ edges. Its first Betti number is $b_1 = 1 - \chi = 1 - (6 - 9) = 4$, so $\M(\partial\Delta^2) \simeq \bigvee^4 S^1$.

\item[(b)] Chari and Joswig computed the homotopy type of $\M(\Delta^2)$ in \cite[Proposition 5.1]{CJ-2005}. We give an alternative argument here. We will compute the homotopy type of $\M(\Delta^2)$ by exhibiting a collapse to $\M(\partial\Delta^2)$. For any simplex $\sigma$ of $\M(\Delta^2)$, let $\st(\sigma)$ denote the star of $\sigma$ in $\M(\Delta^2)$; that is, the set of simplices of $\M(\Delta^2)$ having $\sigma$ as a face. Any maximal gradient vector field on $\Delta^2$ contains exactly one of the primitive
gradient vector fields $(ab, abc)$, $(bc, abc)$, or $(ac, abc)$, since at most one $2$-simplex can appear in a regular pair. Letting $*$ denote the join $A * B = \{\sigma \cup \tau : \sigma \in A,\ \tau \in B\}$, we have
$$
\begin{aligned}
\st((ab, abc)) &= (ab, abc) * \M(ac, bc), \\
\st((bc, abc)) &= (bc, abc) * \M(ab, ac), \\
\st((ac, abc)) &= (ac, abc) * \M(ab, bc),
\end{aligned}
$$
and $\M(\Delta^2) = \st((ab,abc)) \cup \st((bc,abc)) \cup \st((ac,abc))$.

Each $\M(xy, yz)$ is itself a path of length $3$. Hence each star is a cone over a path of length $3$, with apex the
primitive gradient vector field involving $abc$. By \cite[Lemma 4.2.10]{BarmakThesis}, if $K$ is a simplicial complex and $v$ a vertex of $K$, then the link of $v$ is collapsible if and only if $K$ collapses to $K\backslash \{v\}$. Hence we apply this to each of the three stars above. The apex of each cone lies in that star alone, so the four collapse pairs in each star involve only simplices interior to that star. The three collapses can therefore be performed simultaneously inside $\M(\Delta^2)$
without interference, collapsing each star to its base path:
$$
\begin{aligned}
\st((ab, abc)) &\searrow (c, bc) - (a, ac) - (b, bc) - (c, ac), \\
\st((bc, abc)) &\searrow (a, ab) - (c, ac) - (b, ab) - (a, ac), \\
\st((ac, abc)) &\searrow (b, ab) - (c, bc) - (a, ab) - (b, bc).
\end{aligned}
$$
These three paths together have nine edges on six vertices, and one verifies by direct comparison with the image in part (a) that they are exactly the nine edges of $\M(\partial\Delta^2)$. Hence
$$
\M(\Delta^2) \searrow \M(\partial\Delta^2) \simeq \bigvee^4 S^1.
$$

\item[(c)] In both cases above, every maximum gradient vector field has the same cardinality, so $\M(\Delta^2) = \M_{P}(\Delta^2)$ and $\M(\partial\Delta^2) = \M_{P}(\partial\Delta^2)$.
\end{enumerate}
\end{example}

Starting with $n = 3$, the complexes $\M(\Delta^n)$ and $\M_{P}(\Delta^n)$ cease to be isomorphic, as observed by Chari and Joswig. Nevertheless, in the $n = 3$ case one recovers a homotopy equivalence after passing to the pure subcomplex; see Theorem \ref{thm: h.t. of pure}.

\subsection{Background results}

We collect three results that will be used repeatedly in what follows. The first two, due to Bravo and Camarena, allow one to promote a homology calculation to a homotopy equivalence with a wedge of spheres, provided the space is known to be simply connected. The third, from \cite{scoville2020higher}, provides exactly such a connectivity guarantee for $\M(K)$ in terms of the $1$-skeleton of $K$. Together they form the principles by which we identify the homotopy types of $\M(\Delta^3)$, $\M(\partial\Delta^3)$, $\M_{P}(\Delta^3)$, and $\GM(\Delta^3)$ in Sections \ref{sec:n3} and \ref{sec: GM}.

\begin{thm}\label{thm: wedge of spheres homotopy}\cite[Theorem 1]{BravoCamarena24}
Let $X$ be a simply connected CW complex whose only nonzero reduced homology group is $\tilde{H}_d(X) \cong \ZZ^a$. Then
$$
X \simeq \bigvee_a S^d.
$$
\end{thm}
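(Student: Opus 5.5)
This is the classical Hurewicz–Whitehead argument; the case split is on whether $d \geq 2$ or $d \leq 1$.

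\emph{Case $d \geq 2$.} Since $X$ is simply connected with $\tilde H_i(X)=0$ for $i<d$, the Hurewicz theorem, applied inductively, shows that $X$ is $(d-1)$-connected. It also gives $\pi_d(X)\cong H_d(X)\cong \ZZ^a$. Choose maps $f_1,\dots,f_a\colon S^d\to X$ representing a basis of $\pi_d(X)$, and let
$$
f\colon \bigvee_a S^d \to X
$$
be the induced map on the wedge. By naturality of the Hurewicz isomorphism, $f_*$ sends the standard basis of $H_d(\bigvee_a S^d)\cong\ZZ^a$ to the chosen basis of $H_d(X)$, so it is an isomorphism in degree $d$. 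In every other positive degree both sides have zero homology, so $f_*$ is an isomorphism on all of $H_*(-;\ZZ)$.

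Both spaces are simply connected CW complexes, because $d\geq 2$ makes the wedge simply connected. The homology form of Whitehead's theorem therefore says $f$ is a weak equivalence. By Whitehead's theorem for CW complexes, $f$ is then a homotopy equivalence.

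\emph{Case $d\leq 1$.} If $d=1$, simple connectivity gives $H_1(X)=\pi_1(X)^{ab}=0$, which forces $a=0$. Then $X$ is simply connected and acyclic, so Hurewicz and Whitehead show $X$ is contractible, matching the empty wedge (a point). If $d=0$, then $X$ is connected, so $\tilde H_0(X)=0$ and again $a=0$; the same argument gives contractibility.

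The main obstacle is justifying that the map chosen in $\pi_d$ realizes the homology isomorphism, which is precisely where the naturality of the Hurewicz map is needed. This step also needs $H_d$ free, which the hypothesis $\tilde H_d(X)\cong\ZZ^a$ guarantees. Beyond that, the argument is standard.
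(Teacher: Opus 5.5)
Your proof is correct. It is the standard argument: Hurewicz shows $X$ is $(d-1)$-connected with $\pi_d(X)\cong H_d(X)\cong\ZZ^a$ free, a basis is realized by a map $\bigvee_a S^d\to X$ that is an isomorphism on $H_*$, and the homology form of Whitehead's theorem then applies. The degenerate cases $d\le 1$ are also handled correctly. The paper itself does not prove this statement; it imports it from Bravo--Camarena, so there is no in-paper argument to compare your route against.
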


\begin{thm}\label{thm: double wedge of spheres homotopy}\cite[Theorem 2]{BravoCamarena24}
Let $X$ be a simply connected CW complex such that
$$
\tilde{H}_q(X) =
\begin{cases}
\ZZ^a & \text{for } q = d, \\
\ZZ^b & \text{for } q = d + k, \\
0 & \text{otherwise,}
\end{cases}
$$
where $a, b, d, k$ are positive integers with $d > k$ and $k \in \{1, 5, 6, 13, 62\}$. Then
$$
X \simeq \bigvee_a S^d \vee \bigvee_b S^{d + k}.
$$
\end{thm}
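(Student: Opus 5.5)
The plan is to build $X$ up to homotopy from a wedge of $d$-spheres and cells of dimension $d+k$, and then show that all the attaching maps are null-homotopic. The second step is where the restriction $k\in\{1,5,6,13,62\}$ and the hypothesis $d>k$ enter.

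\emph{Step 1: the $d$-dimensional part.} Since $d>k\geq 1$, we have $d\geq 2$. Because $X$ is simply connected and $\tilde H_q(X)=0$ for $q<d$, the Hurewicz theorem gives $X$ $(d-1)$-connected and $\pi_d(X)\cong H_d(X)\cong \ZZ^a$. Choose representatives of a basis of $\pi_d(X)$; they assemble into a map $g\colon \bigvee_a S^d\to X$ that is an isomorphism on $H_d$.

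\emph{Step 2: the cofiber.} Let $C$ be the mapping cone of $g$. By van Kampen, $C$ is simply connected, since $d\geq 2$. By the long exact sequence of the pair, $\tilde H_*(C)$ is $\ZZ^b$ in degree $d+k$ and zero otherwise. Theorem \ref{thm: wedge of spheres homotopy} then gives $C\simeq\bigvee_b S^{d+k}$.

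To turn this into a cell structure on $X$, I will use the standard homology decomposition (minimal cell structure) of a simply connected CW complex with free homology. This gives
$$X\simeq \Bigl(\bigvee_a S^d\Bigr)\cup_f \bigcup_b e^{d+k}$$
for some attaching map $f\colon \bigvee_b S^{d+k-1}\to\bigvee_a S^d$. Alternatively, one can run the relative Hurewicz theorem on $g$: the pair $(M_g,\bigvee_a S^d)$ is $(d+k-1)$-connected, which lets one attach cells of dimension $d+k$ and realize $X$ up to homotopy in this form.

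\emph{Step 3: the attaching map is trivial.} It remains to show $f\simeq *$. Then $X$ is homotopy equivalent to the wedge of $\bigvee_a S^d$ with the cofiber of the null map $\bigvee_b S^{d+k-1}\to *$, which is $\bigvee_b S^{d+k}$, and the theorem follows.

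The map $f$ is determined by its components in
$$\pi_{d+k-1}\Bigl(\bigvee_a S^d\Bigr).$$
Since $d>k$, we have $d+k-1\leq 2d-2$. Hence by the Hilton--Milnor theorem (or the Blakers--Massey excision theorem) this group equals $\bigoplus_a\pi_{d+k-1}(S^d)$, because Whitehead products first appear in degree $2d-1$. By Freudenthal suspension, each summand $\pi_{d+k-1}(S^d)$ is the stable stem $\pi_{k-1}^s$.

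For $k=5,6,13,62$ the stems $\pi_4^s$, $\pi_5^s$, $\pi_{12}^s$ and $\pi_{61}^s$ all vanish, so $f\simeq *$.

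For $k=1$, the stem is $\pi_0^s=\ZZ$, and $f$ is given by an integer $a\times b$ matrix of degrees. This matrix is exactly the cellular boundary map $C_{d+1}\to C_d$. Since $H_d(X)\cong\ZZ^a$ and $H_{d+1}(X)\cong \ZZ^b$ have full rank, this boundary map is zero, so again $f\simeq *$.

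The main obstacle is the bookkeeping in Step 2, namely producing the two-stage cell model with exactly $a$ and $b$ cells. Beyond that, the argument reduces to the known vanishing of these particular stable stems, which is the real reason the list $\{1,5,6,13,62\}$ appears.
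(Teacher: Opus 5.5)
The paper does not prove this statement. It quotes it from Bravo and Camarena and uses it as a black box, so there is no internal argument to compare yours against.

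Your proof is correct and is the standard argument, which is presumably the one in the source:
\begin{enumerate}
\item A minimal cell structure (Hatcher, Prop.~4C.1) gives one $0$-cell, $a$ cells of dimension $d$ and $b$ cells of dimension $d+k$.
\item The stable-range identification $\pi_{d+k-1}(\bigvee_a S^d)\cong\bigoplus_a \pi^s_{k-1}$ holds, and this is exactly where $d>k$ is used.
\item The stems $\pi^s_4,\pi^s_5,\pi^s_{12},\pi^s_{61}$ vanish. The last of these is the deep input, the theorem of Wang and Xu.
\item The case $k=1$ is handled by degrees and the cellular boundary.
\end{enumerate}

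Two small streamlining remarks. First, the mapping-cone computation $C\simeq\bigvee_b S^{d+k}$ in Step 2 is never used afterwards and can be dropped. Second, Hatcher's construction already makes every generator cell a cellular cycle, so for $k=1$ the degree matrix vanishes immediately. Your rank argument is valid but not needed.
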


\begin{rem}
In our applications we will only need the case $k = 1$.
\end{rem}

\begin{thm}\label{thm: d-2 connected}\cite[Theorem 2.7]{scoville2020higher}
If $K$ has a vertex of degree $d$ in $K_{(1)}$, then $\M(K)$ is $(d-2)$-connected.
\end{thm}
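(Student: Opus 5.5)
The plan is to use the $d$ primitive gradient vector fields $(v,e_1),\dots,(v,e_d)$ as apexes of $d$ cones covering $\M(K)$, and then apply a generalized nerve lemma (Björner's version). Here $v$ is the given vertex and $e_1,\dots,e_d$ are the edges of $K_{(1)}$ containing it. These $d$ vertices of $\M(K)$ are pairwise non-adjacent, since any two of them share the simplex $v$.

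For each $i$, let $A_i$ be the closed star of $(v,e_i)$ in $\M(K)$. This is the subcomplex of acyclic matchings $W$ such that $W\cup\{(v,e_i)\}$ is again an acyclic matching. Each $A_i$ is a cone, hence contractible. The version of the nerve lemma to use is: if $X=\bigcup A_i$, every nonempty intersection of $t$ of the $A_i$ is $(k-t+1)$-connected, and the nerve is $k$-connected, then $X$ is $k$-connected. We apply it with $k=d-2$.

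\textbf{Step 1: the cover.} First check that the $A_i$ cover $\M(K)$, i.e.\ that every acyclic matching $W$ extends by some $(v,e_i)$. If $v$ is matched in $W$, then it is matched with some $e_j$, and $W\in A_j$.

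If $v$ is unmatched, restrict $W$ to dimensions $0$ and $1$. There it is a forest-like gradient flow in which $v$ is a root. Adding $(v,e_i)$ fails for one of two reasons:
\begin{itemize}
\item $e_i$ is already matched (either down to its other endpoint $u_i$, or up to a triangle);
\item the other endpoint $u_i$ of $e_i$ flows to $v$ along a $W$-path, which would close a cycle.
\end{itemize}
I expect exactly this to be the main obstacle: a matching in which all $d$ edges at $v$ are blocked. The first thing to try is to enlarge the cover slightly. Replace $A_i$ by the subcomplex of matchings $W$ for which $W'\cup\{(v,e_i)\}$ is acyclic, where $W'$ is $W$ with the pairs containing $e_i$ removed. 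One then checks that this subcomplex deformation retracts to the star, for instance via a contiguity that deletes the offending pairs. If that fails, the fallback is a discrete-Morse (Quillen fiber) argument on the face poset, matching blocked matchings against those obtained by freeing one $e_i$.

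\textbf{Step 2: intersections.} Next show that a nonempty intersection $A_{i_1}\cap\dots\cap A_{i_t}$ is $(d-1-t)$-connected. This bound is what the nerve lemma requires when $k=d-2$. The intersection consists of matchings $W$ that leave $v,e_{i_1},\dots,e_{i_t}$ unmatched and for which none of $u_{i_1},\dots,u_{i_t}$ flows to $v$.

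Such a $W$ is still compatible with each $(v,e_j)$ for $j\notin\{i_1,\dots,i_t\}$ whose edge is free. So the intersection has the same structure as the original problem, but at a vertex of reduced "available degree" $d-t$. I would therefore argue by induction on $d$, removing $e_{i_1},\dots,e_{i_t}$ from the picture, to get $(d-t-2)$-connectivity. The final bookkeeping step is to reconcile this with the required $(d-1-t)$-connectivity, by adjusting the indexing in the nerve lemma.

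\textbf{Step 3: the nerve.} Finally, the nerve is contained in the $(d-1)$-simplex on the indices $\{1,\dots,d\}$. Whenever $W$ leaves $v$ and all the $e_i$ free, the matching lies in every $A_i$, so the nerve should be the full simplex and in particular contractible. Combining this with Steps 1 and 2 in the nerve lemma gives that $\M(K)$ is $(d-2)$-connected.
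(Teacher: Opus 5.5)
The paper quotes this result from \cite{scoville2020higher} without proof, so your outline has to be judged on its own. It has a genuine gap at its foundation: the closed stars of the $x_i=(v,e_i)$ do not cover $\M(K)$, and your repair does not work.

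\textbf{The repair fails.} Your enlarged sets need not retract onto the stars. Take $K=\partial\Delta^2$, $v=a$, $e_i=ab$. The $4$-cycle through $(a,ab)$, $(c,bc)$, $(b,ab)$, $(c,ac)$, in that order, lies in $\M(\partial\Delta^2)$, and each of its four edges satisfies your condition. So the enlarged set contains a loop in a graph and is not contractible. Also, ``a contiguity that deletes pairs'' is not a simplicial map.

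\textbf{The uncovered matchings carry the topology.} Look at the claw $K=K_{1,d}$, where every matching is acyclic. There $\M(K)$ is the $(d-1)$-simplex $\Sigma$ on the vertices $y_i=(u_i,e_i)$, with cones $x_i*F_i$ attached, where $F_i$ is the facet of $\Sigma$ opposite $y_i$.
\begin{itemize}
\item The union of the closed stars is $\M(K)$ minus the open simplex $\Sigma$. It collapses to $\partial\Sigma\simeq S^{d-2}$.
\item The nerve is $\partial\Delta^{d-1}$, not the full simplex that Step 3 asserts.
\end{itemize}
So the only $(d-2)$-sphere present is killed solely by $\Sigma$, a matching in which all $d$ edges at $v$ are blocked.

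\textbf{Step 2 fails independently.} For $t\ge 2$, the intersection of the closed stars equals $\bigcap_j \lk(x_{i_j})$. It consists of matchings with $v$ and the $e_{i_j}$ unmatched and no $u_{i_j}$ flowing to $v$, which is a global flow condition.
\begin{itemize}
\item It is not $\M$ of any subcomplex of $K$. It contains $(u_j,e_j)$ for $j\notin\{i_1,\dots,i_t\}$ but never $(v,e_j)$. So there is no induction hypothesis to invoke.
\item Even granting $(d-t-2)$-connectivity, Bj\"orner's lemma with $k=d-2$ needs $(d-t-1)$. Reindexing to $k=d-3$ only yields $(d-3)$-connectivity.
\end{itemize}

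\textbf{What survives.} If $v$ is unmatched in $W$, then $W$ blocks $x_i$ only when $e_i$ is matched upward or $u_i$ flows to $v$. Each such $i$ uses up a distinct pair of $W$. If $v$ is matched with $e_j$, then $W$ already lies in the star of $x_j$. Hence every matching with at most $d-1$ pairs lies in some closed star, so the stars cover the $(d-2)$-skeleton.

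\textbf{What is missing.} You would still need to show that the $(d-2)$-spheres of that union bound in $\M(K)$. The claw shows they are filled only by the $(d-1)$-simplices in which every edge at $v$ is blocked. Neither your outline nor its vaguely indicated Quillen-fiber fallback supplies this step.
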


We will apply Theorem \ref{thm: d-2 connected} most often to $K = \Delta^n$ or $K = \partial\Delta^n$, whose $1$-skeletons are $n$-regular, yielding $(n-2)$-connectivity of $\M(K)$ and in particular simple connectivity as soon as $n \geq 3$.

With this framework in place, we now turn to the structure of $\M(\Delta^n)$ for general $n$. The homotopy type is currently out of reach at this level of generality, but one can already ask a more basic question: Recall that a gradient vector field or discrete Morse matching is called \textbf{optimal} if $\sum m_i$ is minimal where $m_i$ denotes the number of critical simplices of dimension $i$. How many optimal discrete Morse matchings does $\Delta^n$ admit? This is the starting point of the next section.

\section{Towards counting optimal discrete Morse matchings on $\Delta^n$}

Let $f(n)$ denote the number of optimal discrete Morse matchings on $\Delta^n$.  In \cite{CJ-2005}, the authors observe that $f(1) = 2$, $f(2) = 9$, and $f(3) = 256$.  We computed that $f(4) = 380{,}125$ by exhaustive enumeration of all maximum acyclic matchings on the Hasse diagram of $\Delta^4$, carried out on a computer cluster. Our algorithmic method successfully reproduces the known values $f(1) = 2$, $f(2) = 9$, and $f(3) = 256$, and the result for $f(4)$ was independently cross-checked against the bijection of Proposition \ref{prop: top dim bijection} below, which predicts that $\M(\Delta^n)$ and $\M(\partial\Delta^n)$ have the same number of top-dimensional facets; in the case $n = 4$, both sides yield $380{,}125$.

Beyond these values, little is known. Chari and Joswig established the bounds $r(n+1) \leq f(n) \leq (n+1)^{2^{n-1}}$, where $r(n+1)$ is a recursive lower bound \cite[Proposition 5.7, Corollary 5.5]{CJ-2005}. Our computation shows that the upper bound is not tight at $n = 4$, and no closed formula for $f(n)$ is known.

In this section we develop structural results that constrain $f(n)$ for general $n$. We first show that $\M(\Delta^n)$ and $\M(\partial\Delta^n)$ have the same number of top-dimensional facets. We then establish a layer count for optimal matchings and use it to show  that the critical $(n-2)$-faces of an optimal matching on the  $(n-2)$-skeleton of $\Delta^n$ form a spanning tree of $K_{n+1}$. Here we  view the $(n-1)$-faces $F_0, \ldots, F_n$ of $\Delta^n$ as the vertices of  $K_{n+1}$, with an edge between $F_i$ and $F_j$ whenever they share an 
$(n-2)$-face.  These two ingredients combine in our main result of this section, Theorem \ref{thm: morse bijection}, which says that restricting an optimal matching on $\Delta^n$ to $\Delta^n_{(n-2)}$ defines a surjection with uniform fiber size $n+1$, so that
$$
f(n) = (n+1) \cdot \bigl|\text{top-dimensional facets of } \M(\Delta^n_{(n-2)})\bigr|.
$$
Hence the enumeration of optimal matchings on the full $n$-simplex reduces to the same enumeration on its $(n-2)$-skeleton at the cost of a single multiplicative factor of $n+1$.

\subsection{The top-dimensional facet bijection}\label{subsec: top facet bijection}

The following proposition establishes that $f(n)$ equals the number of top-dimensional facets of $\M(\partial\Delta^n)$. The idea is simple: given an optimal matching on $\Delta^n$, remove the $n$-simplex, making the simplex it was matched with critical and thus creating an optimal matching on $\partial \Delta^n$.  Conversely, given an optimal matching on $\partial\Delta^n$, insert the $n$-simplex and match this with the unique critical $(n-1)$-simplex.  See Figure \ref{fig: top facet bijection}. This result is most likely known, but established here for completeness.

\begin{prop}\label{prop: top dim bijection}
For every $n\ge 2$, there is a bijection between the top-dimensional facets of $\M(\Delta^n)$ and those of $\M(\partial\Delta^n)$.
\end{prop}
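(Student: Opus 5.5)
The plan is to make precise the map sketched before the statement and to verify that it is well defined and invertible. The first step is to identify top-dimensional facets with optimal matchings. A facet of $\M(K)$ of top dimension is an acyclic matching of maximum cardinality. Since every simplex lies in at most one pair, maximizing the number of pairs is the same as minimizing $\sum m_i$. By the weak Morse inequalities, any gradient vector field on $\Delta^n$ has at least one critical simplex. For $\partial\Delta^n \cong S^{n-1}$ it has at least two, one of dimension $0$ and one of dimension $n-1$, since $\tilde H_{n-1}\neq 0$.

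Both bounds are attained. On $\Delta^n$, pairing every simplex not containing a fixed vertex $v$ with its cone $v*\sigma$ is acyclic and leaves only $v$ critical. Deleting the pair $(\Delta^{n-1}_{\hat v}, \Delta^n)$ from this field gives a field on $\partial\Delta^n$ with exactly two critical simplices. Hence optimal matchings on $\Delta^n$ have exactly one critical simplex, necessarily a vertex (by $\chi=1$ and parity of counts, or directly from the Morse inequalities). Optimal matchings on $\partial\Delta^n$ have exactly one critical vertex and one critical $(n-1)$-simplex.

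Next I will define $\Phi$ from optimal matchings on $\Delta^n$ to those on $\partial\Delta^n$. In an optimal matching $W$ on $\Delta^n$ the top simplex $\Delta^n$ is not critical, so it is paired with a unique facet $F$. I set $\Phi(W)=W\setminus\{(F,\Delta^n)\}$. This is a matching on $\HH(\partial\Delta^n)$, since $\HH(\partial\Delta^n)$ is the induced subgraph of $\HH(\Delta^n)$ on all simplices except $\Delta^n$. It is acyclic because any directed cycle in $\HH_{\Phi(W)}(\partial\Delta^n)$ would also be a directed cycle in $\HH_W(\Delta^n)$. It has $|W|-1$ pairs, with critical simplices the old critical vertex together with $F$, so it is optimal.

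For the inverse $\Psi$, given an optimal $W'$ on $\partial\Delta^n$ with unique critical $(n-1)$-simplex $F$, I set $\Psi(W')=W'\cup\{(F,\Delta^n)\}$. This is clearly a matching, and it leaves exactly one critical simplex. The step needing care is acyclicity. In $\HH_{\Psi(W')}(\Delta^n)$, the vertex $\Delta^n$ has out-edges only to $F$, via the reversed edge, and in-edges from the other facets $G\neq F$. Each such $G$ is matched in $W'$ with an $(n-2)$-face, because $F$ is the only critical $(n-1)$-simplex and $(n-1)$-simplices can only be matched downward. A directed cycle through $\Delta^n$ would therefore run $\Delta^n\to F\to\cdots\to G\to\Delta^n$. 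But $F$ is critical in $W'$, so every edge out of $F$ in the modified diagram goes up to $\Delta^n$, and $F$ has no edges down. The only edge leaving $F$ returns to $\Delta^n$, so a cycle would have to be the 2-cycle $\Delta^n\to F\to\Delta^n$, which does not exist because the edge is reversed rather than duplicated. Any cycle avoiding $\Delta^n$ lies in $\HH_{W'}(\partial\Delta^n)$, which is acyclic.

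Finally, $\Phi$ and $\Psi$ are visibly mutually inverse. I expect the acyclicity check for $\Psi$ to be the only real point, and the observation that a critical top-dimensional face of $\partial\Delta^n$ has no outgoing downward edges settles it. The hypothesis $n\ge 2$ is used only so that the optimal counts above hold and $\partial\Delta^n$ is a sphere of positive dimension.
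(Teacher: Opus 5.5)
Your proposal is correct and follows essentially the paper's proof: you delete the pair $(F,\Delta^n)$ to pass to $\partial\Delta^n$ and adjoin it to go back, and acyclicity of the extension comes from the fact that the critical facet $F$ of $\partial\Delta^n$ can never be re-entered. In your orientation $F$ is simply a sink in $\HH_{\Psi(W')}(\Delta^n)$, which would be cleaner to state directly than speaking of an edge ``out of $F$'' to $\Delta^n$. Your explicit cone matching, which shows that one and two critical simplices are actually attained, is a small improvement over the paper's appeal to collapsibility.
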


\begin{proof}
Let $\sigma$ denote the unique $n$-simplex of $\Delta^n$. If $F$ is a top-dimensional facet of $\M(\Delta^n)$, then $F$ is an optimal matching on $\Delta^n$. Since $\Delta^n$ is collapsible, every optimal matching on $\Delta^n$ has exactly one critical simplex which is necessarily a vertex. Hence $\sigma$ is matched to a unique $(n-1)$-face, say $\tau$. Deleting the pair $(\tau,\sigma)$ yields an acyclic matching $\phi(F)$ on $\partial\Delta^n$. Its only critical simplices are the critical vertex of $F$ and $\tau$, so $\phi(F)$ has exactly two critical simplices. Since $\partial\Delta^n\simeq S^{n-1}$, this is minimal, and therefore $\phi(F)$ is a top-dimensional facet of $\M(\partial\Delta^n)$.

Conversely, let $G$ be a top-dimensional facet of $\M(\partial\Delta^n)$. Because $\partial\Delta^n\simeq S^{n-1}$, every optimal matching on $\partial\Delta^n$ has exactly two critical simplices: namely, one critical vertex and one critical $(n-1)$-simplex, say $\tau$. Let $\psi(G)$ be obtained by adjoining the pair $(\tau,\sigma)$. This is again an acyclic matching since any directed cycle would have to involve the new matched edge $\tau\to \sigma$, but after leaving $\sigma$ one enters $\partial\Delta^n$, where no directed path can return to the critical simplex $\tau$. Thus $\psi(G)$ is acyclic, and its only critical simplex is the critical vertex, so it is optimal on $\Delta^n$.

The constructions are plainly inverse to one another, and hence they define a bijection.
\end{proof}

\begin{rem}
In the case $n = 1$, $\Delta^1$ is an edge with two optimal matchings while $\partial\Delta^1$ consists of two isolated vertices, so the proposition fails. This is the reason for the hypothesis $n \geq 2$.
\end{rem}

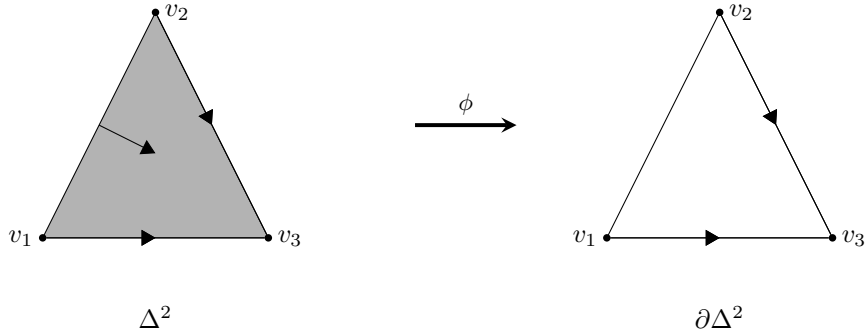
\begin{figure}[h]
\centering
\begin{tikzpicture}[scale=1.5,
    decoration={markings,mark=at position .5 with {\Huge{\arrow{triangle 60}}}},]

\begin{scope}[shift={(-2.5,0)}]
\filldraw[fill=black!30, draw=black] (-1,0)--(1,0)--(0,2)--cycle;
\node[inner sep=1pt, circle, fill=black](v1) at (-1,0) {};
\node[inner sep=1pt, circle, fill=black](v2) at (1,0) {};
\node[inner sep=1pt, circle, fill=black](v3) at (0,2) {};
\draw[->-]  (v1)--(v2) node[midway, below] {} ;
\draw[->-]  (v3)--(v2) node[midway, left] {} ;
\draw[-triangle 60]  (-.5,1)--(0,.75) node[above] {} ;
\node[anchor = east]  at (v1) {{$v_1$}};
\node[anchor = west]  at (v2) {{$v_3$}};
\node[anchor = west]  at (v3) {{$v_2$}};
\node at (0,-0.7) {$\Delta^2$};
\end{scope}

\draw[->, >=stealth, line width=1.5pt] (-.2,1) -- (.7,1)
    node[midway, above] {\small $\phi$};

\begin{scope}[shift={(2.5,0)}]
\draw[black] (-1,0)--(1,0)--(0,2)--cycle;
\node[inner sep=1pt, circle, fill=black](w1) at (-1,0) {};
\node[inner sep=1pt, circle, fill=black](w2) at (1,0) {};
\node[inner sep=1pt, circle, fill=black](w3) at (0,2) {};
\draw[->-]  (w1)--(w2) node[midway, below] {} ;
\draw[->-]  (w3)--(w2) node[midway, left] {} ;
\node[anchor = east]  at (w1) {{$v_1$}};
\node[anchor = west]  at (w2) {{$v_3$}};
\node[anchor = west]  at (w3) {{$v_2$}};
\node at (0,-0.7) {$\partial\Delta^2$};
\end{scope}

\end{tikzpicture}
\caption{The top-facet bijection $\phi$ of
Proposition \ref{prop: top dim bijection} for $n = 2$. Left: An optimal matching on $\Delta^2$ with matched pairs $(v_1, v_1v_3)$, $(v_2, v_2v_3)$, and $(v_1v_2, v_1v_2v_3)$; the unique critical simplex is $v_3$. Right: The image $\phi(F)$ on $\partial\Delta^2$, obtained by deleting the pair $(v_1v_2, v_1v_2v_3)$ involving the $2$-simplex. The surviving pairs $(v_1, v_1v_3)$ and $(v_2, v_2v_3)$ form an optimal matching on $\partial\Delta^2$ with two critical simplices: $v_3$ and $v_1v_2$.}
\label{fig: top facet bijection}
\end{figure}

\subsection{Layer count and spanning tree structure}\label{subsec: layer count}

We now turn from enumeration on $\Delta^n$ itself to enumeration on its  $(n-2)$-skeleton. We first record the result that optimal matchings on $\Delta^n$ have a rigid layer structure in the sense that the number of pairs matching a $k$-face with a $(k+1)$-face is completely determined by $n$ and $k$.

\begin{lem}\label{lem: counting pairs}
Let $V$ be an optimal discrete Morse matching on $\Delta^n$, with critical vertex $v$. For each $k=0,1,\dots,n-1$, the number of pairs $(\sigma^{(k)},\tau^{(k+1)})\in V$ is exactly $\binom{n}{k+1}$.
\end{lem}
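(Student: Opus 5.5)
Let $p_k$ denote the number of pairs $(\sigma^{(k)},\tau^{(k+1)})\in V$, for $k=0,\dots,n-1$. The plan is to set up a recursion on $k$ by counting simplices dimension by dimension. Since $V$ is optimal on the collapsible complex $\Delta^n$, the proof of Proposition \ref{prop: top dim bijection} shows that its only critical simplex is the vertex $v$. Hence every $k$-simplex with $k\ge 1$ is matched, and among the vertices all except $v$ are matched.

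Each $k$-simplex is matched either upward, to a $(k+1)$-face, or downward, to a $(k-1)$-face, and not both, since each simplex lies in at most one pair. Counting the $k$-simplices, which number $\binom{n+1}{k+1}$, gives
$$
\binom{n+1}{k+1} = p_k + p_{k-1} + [k=0],
$$
where $p_{-1}:=0$ and the indicator term accounts for the critical vertex $v$. For $k=0$ this gives $p_0 = (n+1)-1 = n = \binom{n}{1}$. For $k\ge 1$ we get $p_k = \binom{n+1}{k+1} - p_{k-1}$. Assuming inductively that $p_{k-1}=\binom{n}{k}$, Pascal's rule gives $p_k = \binom{n+1}{k+1}-\binom{n}{k} = \binom{n}{k+1}$, which completes the induction.

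As a consistency check, at $k=n$ the unique $n$-simplex must be matched downward, and indeed $p_{n-1}=\binom{n}{n}=1$. This agrees with the top-dimension equation $1 = p_n + p_{n-1}$ with $p_n=0$.

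The only step needing care is the justification that an optimal matching has exactly one critical simplex. Example \ref{ex: delta2} and the literature do not state this in full generality. I would cite the argument already used in Proposition \ref{prop: top dim bijection}, which rests on the collapsibility of $\Delta^n$, together with the weak Morse inequality $m_0\ge 1$. The acyclicity of $V$ plays no further role: the rest of the argument is pure counting.
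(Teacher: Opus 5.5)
Your proposal is correct and follows essentially the same argument as the paper. Both proofs use collapsibility of $\Delta^n$ to get that $v$ is the only critical simplex, then induct on $k$: of the $\binom{n+1}{k+1}$ $k$-faces, $\binom{n}{k}$ are matched downward and the remaining $\binom{n}{k+1}$ must be matched upward. Your identity $\binom{n+1}{k+1} = p_k + p_{k-1} + [k=0]$ and the appeal to $m_0 \ge 1$ simply make explicit what the paper leaves implicit.
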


\begin{proof}
Since $\Delta^n$ is collapsible and $V$ is optimal, the only critical simplex is $v$. Hence every simplex other than $v$ is matched.

We proceed by induction on $k$. For $k=0$, every vertex except $v$ must be matched upward to an edge, so the number of $(0,1)$-pairs is $n=\binom{n}{1}$.

Now let $1\leq k\leq n-1$, and assume that the number of $(k-1,k)$-pairs is $\binom{n}{k}$. Each such pair uses one $k$-face as its upper element, so exactly $\binom{n}{k}$ many $k$-faces are matched downward. Since $\Delta^n$ has $\binom{n+1}{k+1}$ faces of dimension $k$, the remaining
$$
\binom{n+1}{k+1}-\binom{n}{k}=\binom{n}{k+1}
$$
$k$-faces cannot be critical and therefore must be matched upward to $(k+1)$-faces. Thus the number of $(k,k+1)$-pairs is $\binom{n}{k+1}$.
\end{proof}

Lemma \ref{lem: counting pairs} shows that optimal matchings on $\Delta^n$ are tightly constrained. We now use this rigidity to relate optimal matchings on $\Delta^n$ to those on its $(n-2)$-skeleton $\Delta^n_{(n-2)}$. Recall from \cite[Theorem 5.2]{zax2012morse} that $\Delta^n_{(n-2)} \simeq \bigvee^n S^{n-2}$, so any optimal matching on $\Delta^n_{(n-2)}$ has exactly $n$ critical $(n-2)$-faces in addition to the critical vertex.

As mentioned above, the $(n-2)$-faces of $\Delta^n$ admit a natural identification with the edges  of the complete graph $K_{n+1}$. Let $\mathcal{F} = \{F_0, \ldots, F_n\}$ denote the $(n-1)$-dimensional simplices of $\Delta^n$. Then each $(n-2)$-face of $\Delta^n$ lies in exactly two elements of $\mathcal{F}$, and viewing $\mathcal{F}$ as the vertex set of a graph with an edge between $F_i$ and $F_j$ whenever they share an $(n-2)$-face, we obtain $K_{n+1}$. Under this identification we may ask what the critical $(n-2)$-faces of an optimal matching on $\Delta^n_{(n-2)}$ look like as a subgraph of $K_{n+1}$. The answer is the content of the next lemma.

\begin{lem}\label{lem: spanning tree}
Let $n \ge 3$, let $O$ be an optimal discrete Morse matching on $\Delta^n_{(n-2)}$, and let $U=\{u_1,\dots,u_n\}$ be its critical $(n-2)$-faces. Under the natural identification of the $(n-2)$-faces of $\Delta^n$ with the edges of $K_{n+1}$, the set $U$ forms a spanning tree of $K_{n+1}$.
\end{lem}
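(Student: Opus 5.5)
The plan is to count first and then prove connectivity. By \cite[Theorem 5.2]{zax2012morse}, as recalled above, $O$ has exactly $n$ critical $(n-2)$-faces. So $U$ is a subgraph of $K_{n+1}$ with $n$ edges on $n+1$ vertices. A graph with $n$ edges on $n+1$ vertices is a spanning tree if and only if it is connected. It therefore suffices to show that $U$ is a connected spanning subgraph.

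To prove connectivity, suppose not. Then the vertex set $\mathcal{F}$ splits as $S \sqcup S^c$ with both parts non-empty and no edge of $U$ joining $S$ to $S^c$. Orient the facets $F_j$ coherently as in $\partial\Delta^n$, and consider the simplicial $(n-2)$-chain
$$
z=\sum_{F\in S}\partial F .
$$
Each $(n-2)$-face lies in exactly two facets, with opposite induced orientations. Hence the faces shared by two members of $S$ cancel, and $z$ is (up to sign) the sum of exactly those $(n-2)$-faces joining $S$ to $S^c$. The chain $z$ has the following properties:
\begin{itemize}
\item it is non-zero, since $S$ and $S^c$ are both non-empty and $K_{n+1}$ is complete;
\item it is a cycle, since $\partial\partial=0$;
\item it is supported entirely on non-critical $(n-2)$-faces, by the choice of the partition.
\end{itemize}
Since $\Delta^n_{(n-2)}$ has no $(n-1)$-faces, every non-critical $(n-2)$-face in the support of $z$ is matched downward in $O$ to an $(n-3)$-face.

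The remaining step, which I expect to be the main point, is to show that a non-zero top-dimensional cycle cannot be supported on faces that are all matched downward. This contradicts acyclicity as follows.
\begin{enumerate}
\item Pick $\tau_0$ in the support of $z$, and let $(\sigma_0,\tau_0)\in O$ be its pair.
\item The coefficient of $\sigma_0$ in $\partial z=0$ vanishes, and $\tau_0$ contributes $\pm1$ to it. So $\sigma_0$ must lie in some other face $\tau_1\neq\tau_0$ of the support.
\item The face $\tau_1$ is matched to some $\sigma_1$, and $\sigma_1\neq\sigma_0$ because $O$ is a matching. Then $\sigma_1,\tau_1,\sigma_0,\tau_0$ is a $V$-path.
\item Repeat the argument with $\sigma_1$ in place of $\sigma_0$ to extend the path backward indefinitely: $\ldots,\sigma_2,\tau_2,\sigma_1,\tau_1,\sigma_0,\tau_0$.
\end{enumerate}
At each step the new face $\tau_{i+1}$ differs from $\tau_i$, so consecutive $(n-3)$-faces $\sigma_{i+1}\neq\sigma_i$. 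Since $K$ is finite, some $\sigma_i$ eventually repeats. This produces a non-trivial closed $V$-path, contradicting that $O$ is a gradient vector field.

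Hence $U$ is connected, and with $n$ edges on $n+1$ vertices it is a spanning tree of $K_{n+1}$. The hypothesis $n\ge3$ guarantees that the $(n-3)$-faces exist. It also makes the Zax count of $n$ critical $(n-2)$-faces, quoted in the paper just before this lemma, available. The only care needed is in the sign bookkeeping that shows $z$ is a non-zero cycle. This is standard for $\partial\Delta^n$, where $\sum_j \partial F_j=0$ with coherent orientations.
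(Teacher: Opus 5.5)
Your proof is correct and follows the same route as the paper. Both arguments count $n$ edges on $n+1$ vertices, take the cut cycle $\sum_{F\in S}\pm\partial F$ (which vanishes on every critical face), and derive a contradiction with acyclicity.

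The differences are only in execution, and they make your version slightly more self-contained:
\begin{itemize}
\item You prove that no nonzero cycle can be supported on downward-matched faces by an explicit backward $V$-path chase. The paper instead asserts, without proof, that acyclicity lets $\partial_{B,A}$ be ordered to be triangular with $\pm1$ on the diagonal, so that the projection $Z_{n-2}\to\mathbb{Z}^U$ is injective.
\item You show $z\neq 0$ directly, since every cut face has coefficient $\pm1$ and the cut is nonempty. The paper instead appeals to $\sum_i(-1)^i\partial F_i=0$ being the only linear relation among the $\partial F_i$.
\end{itemize}
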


\begin{proof}
Let $\mathcal{F}=\{F_0,\dots,F_n\}$ be the set of $(n-1)$-faces of $\Delta^n$, and let $G=(\mathcal{F},U)$ be the graph whose vertices are the $F_i$ and whose edges are the critical $(n-2)$-faces. Since $\Delta^n_{(n-2)} \simeq \bigvee^n S^{n-2}$, every optimal matching on $\Delta^n_{(n-2)}$ has exactly one critical vertex and exactly $n$ critical $(n-2)$-faces. Thus $G$ has $n+1$ vertices and $n$ edges, so it suffices to show that $G$ is connected.

Suppose by way of contradiction that $G$ is disconnected. Let $C \subsetneq \mathcal{F}$ be the vertex set of a connected component. Define
$$
z_C = \sum_{F_i \in C} (-1)^i \partial F_i
\in C_{n-2}(\Delta^n_{(n-2)};\mathbb{Z}).
$$
Since each $\partial F_i$ is an $(n-2)$-cycle, so is $z_C$. We claim that every critical $(n-2)$-face has coefficient 0 in $z_C$. Let $u \in U$ be a critical $(n-2)$-face. Then $u$ lies in exactly two facets, say $F_i$ and $F_j$, and it appears in $\partial F_i$ and $\partial F_j$ with opposite signs. Since $u$ is an edge of $G$ and $C$ is a connected component, either both $F_i, F_j$ lie in $C$ or neither does. In the first case the two contributions cancel in $z_C$, and in the second case $u$ does not appear at all. Hence every critical $(n-2)$-face has coefficient 0 in $z_C$.

Now let $Z_{n-2}$ denote the group of $(n-2)$-cycles, and let
$$
\pi \colon Z_{n-2} \longrightarrow \mathbb{Z}^U
$$
be the projection onto the coordinates indexed by the critical $(n-2)$-faces. Since $\Delta^n_{(n-2)}$ has no $(n-1)$-cells, $B_{n-2}(\Delta^n_{(n-2)}) = 0$, so
$$
Z_{n-2} \cong H_{n-2}(\Delta^n_{(n-2)}) \cong \mathbb{Z}^n.
$$
We now claim that $\pi$ is injective. Let $B$ denote the set of matched $(n-2)$-faces, and let $A$ denote the set of $(n-3)$-faces that are paired with elements of $B$ under $O$. The submatrix $\partial_{B,A}$ of the boundary operator $\partial_{n-2}$ with columns indexed by $B$ and rows indexed by $A$ is invertible over $\mathbb{Z}$. Indeed, acyclicity of $O$ allows the matched pairs to be ordered so that $\partial_{B,A}$ is upper triangular with $\pm 1$ on the diagonal. For any cycle $z = (z_B, z_U) \in Z_{n-2}$, the condition $\partial z = 0$ restricted to the rows in $A$ gives $\partial_{B,A}(z_B) + \partial_{U,A}(z_U) = 0$, so $z_B =  (\partial_{B,A})^{-1} \partial_{U,A}(z_U)$, and $z$ is uniquely determined by $z_U = \pi(z)$. Hence $\pi$ is injective.

Since all critical coordinates of $z_C$ vanish, we have that $\pi(z_C) = 0$, and injectivity of $\pi$ gives $z_C = 0$. On the other hand, the cycles $\partial F_0, \ldots, \partial F_n$ satisfy exactly one linear relation, namely,
$$
\sum_{i=0}^n (-1)^i \partial F_i = 0,
$$
coming from the boundary of the $n$-simplex. Since $C$ is a nonempty proper subset of $\mathcal{F}$, the partial sum $z_C$ cannot vanish. This contradiction shows that $G$ is connected.

We conclude that since $G$ is a connected graph on $n+1$ vertices with $n$ edges, it is a spanning tree.
\end{proof}

\begin{example}
We illustrate Lemma \ref{lem: spanning tree} for $n = 3$. Here $\Delta^3_{(1)}$ is the complete graph $K_4$ on vertices $v_0, v_1, v_2, v_3$. The four facets of $\Delta^3$ are the triangles $F_i = v_0 \cdots \hat{v}_i \cdots v_3$, the six $(n-2)$-faces are the edges $e_{ij} = v_i v_j$, and the four $(n-3)$-faces are the vertices $v_i$.

Consider the optimal matching on $\Delta^3_{(1)}$ given by
$$
(v_1, e_{01}), \qquad (v_2, e_{02}), \qquad (v_3, e_{03}).
$$
The unique critical vertex is $v_0$, and the critical edges are $U = \{e_{12}, e_{13}, e_{23}\}$. Under the identification of edges of $\Delta^3$ with edges of the graph $G = (\mathcal{F}, U)$, each $e_{ij}$ joins the two facets containing it:
$$
e_{12} \leftrightarrow F_0 F_3, \qquad
e_{13} \leftrightarrow F_0 F_2, \qquad
e_{23} \leftrightarrow F_0 F_1.
$$
These three edges form a star centered at $F_0$, which is a spanning tree of $K_4$. See Figure \ref{fig: spanning tree}.

\begin{figure}[h]
\centering
\begin{tikzpicture}[scale=1.4,
    ->-/.style={decoration={markings, mark=at position 0.5 with 
    {\arrow{triangle 60}}}, postaction={decorate}}]

\begin{scope}[shift={(-2.5,0)}]

\coordinate (v0) at (0,2.4);
\coordinate (v1) at (-1.6,0);
\coordinate (v2) at (1.6,0);
\coordinate (v3) at (0,0.9);

\draw[black!70] (v0)--(v2);
\draw[black!70] (v1)--(v2);
\draw[black!70] (v1)--(v3);
\draw[black!70] (v2)--(v3);
\draw[black!70] (v0)--(v1);
\draw[black!70] (v0)--(v3);

\draw[->-] (v1)--(v0);
\draw[->-] (v2)--(v0);
\draw[->-] (v3)--(v0);

\draw[black] (v1)--(v2);
\draw[black] (v1)--(v3);
\draw[black] (v2)--(v3);

\node[inner sep=2pt, circle, fill=black] at (v0) {};
\node[inner sep=2pt, circle, fill=black] at (v1) {};
\node[inner sep=2pt, circle, fill=black] at (v2) {};
\node[inner sep=2pt, circle, fill=black] at (v3) {};

\node[anchor=south] at (v0) {$v_0$};
\node[anchor=east] at (v1) {$v_1$};
\node[anchor=west] at (v2) {$v_2$};
\node[anchor=north] at ($(v3)+(0,-0.15)$) {$v_3$};

\node[anchor=north, black] at ($(v1)!0.5!(v2)+(0,-0.1)$) {\small $e_{12}$};
\node[anchor=east, black] at ($(v1)!0.5!(v3)+(0.6,0)$) {\small $e_{13}$};
\node[anchor=west, black] at ($(v2)!0.5!(v3)+(-.6,0)$) {\small $e_{23}$};

\node at (0,-1) {$\Delta^3_{(1)}$};
\end{scope}

\draw[->, >=stealth, line width=1.2pt] (-0.3,1.0) -- (0.3,1.0);

\begin{scope}[shift={(2.5,0)}]

\coordinate (F0) at (0,2.4);
\coordinate (F1) at (-1.5,0.3);
\coordinate (F2) at (1.5,0.3);
\coordinate (F3) at (0,1.0);

\draw[black!50] (F1)--(F2);
\draw[black!50] (F1)--(F3);
\draw[black!50] (F2)--(F3);

\draw[black, line width=2pt] (F0)--(F1)
    node[midway, anchor=east] {\small $e_{23}$};
\draw[black, line width=2pt] (F0)--(F2)
    node[midway, anchor=west] {\small $e_{13}$};
\draw[black, line width=2pt] (F0)--(F3)
    node[midway, anchor=east] {\small $e_{12}$};

\node[anchor=north, black] at ($(F1)!0.5!(F2)+(0,-0.05)$) 
    {\small $e_{03}$};
\node[anchor=east, black] at ($(F1)!0.5!(F3)+(0.1,.1)$) 
    {\small $e_{02}$};
\node[anchor=west, black] at ($(F2)!0.5!(F3)+(0,0)$) 
    {\small $e_{01}$};

\node[inner sep=2pt, circle, fill=black] at (F0) {};
\node[inner sep=2pt, circle, fill=black] at (F1) {};
\node[inner sep=2pt, circle, fill=black] at (F2) {};
\node[inner sep=2pt, circle, fill=black] at (F3) {};

\node[anchor=south] at (F0) {$F_0$};
\node[anchor=east] at (F1) {$F_1$};
\node[anchor=west] at (F2) {$F_2$};
\node[anchor=north] at ($(F3)+(0,-0.15)$) {$F_3$};

\node at (0,-1) {$K_4$};
\end{scope}

\end{tikzpicture}
\caption{Illustration of Lemma \ref{lem: spanning tree} for $n = 3$. Left: An optimal matching on $\Delta^3_{(1)}$ with matched pairs $(v_1, e_{01})$, $(v_2, e_{02})$, $(v_3, e_{03})$ and critical vertex $v_0$. Right: The complete graph $K_4$ on the vertex set $\{F_0, F_1, F_2, F_3\}$, where each edge is labeled by the $(n-2)$ face of $\Delta^3$ it represents. The thickened edges are those corresponding to the critical edges of the matching, forming a spanning tree centered at $F_0$.}
\label{fig: spanning tree}
\end{figure}
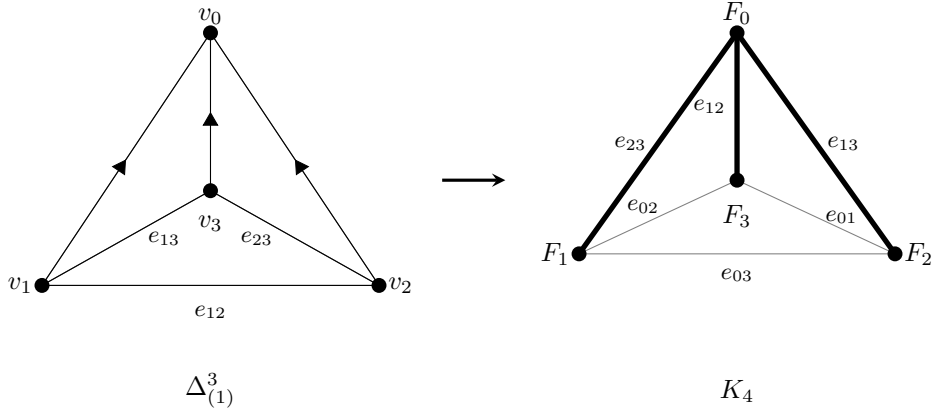

To see the idea of the proof, suppose we tried to split $\mathcal{F}$ into $C = \{F_0\}$ and $\mathcal{F} \setminus C = \{F_1, F_2, F_3\}$. Then
$$
z_C = (-1)^0 \partial F_0 = \partial(v_1 v_2 v_3) = e_{23} - e_{13} + e_{12}.
$$
The critical edges are $e_{12}, e_{13}, e_{23}$, and all three appear in $z_C$ with nonzero coefficients which does not satisfy the hypothesis of the lemma. 

If instead we tried the partition $C = \{F_0, F_3\}$ and $\mathcal{F} \setminus C = \{F_1, F_2\}$, then
$$
\begin{aligned}
z_C &= \partial F_0 - \partial F_3 \\
&= (e_{23} - e_{13} + e_{12}) - (e_{12} - e_{02} + e_{01})\\
&= e_{23} - e_{13} + e_{02} - e_{01}.
\end{aligned}
$$
The critical edges $e_{12}, e_{13}, e_{23}$ have coefficients $0, -1, 1$ respectively, so again $z_C$ has nonzero critical support. This too is consistent: the edge $e_{13} \leftrightarrow F_0 F_2$ crosses the cut between $C$ and its complement, confirming that $C$ is not a union of connected components of $G$.
\end{example}

\subsection{Optimal matching bijection}\label{subsec: restriction}

We have shown that the critical $(n-2)$-faces of an optimal matching on $\Delta^n_{(n-2)}$ form a spanning tree of $K_{n+1}$. We now turn to the converse direction: every optimal matching on $\Delta^n$ restricts to an optimal matching on $\Delta^n_{(n-2)}$, and we count how many optimal matchings on $\Delta^n$ project to a given optimal matching on the $(n-2)$-skeleton.

\begin{prop}\label{prop: restricting delta n}
Let $n\geq 3$. Restricting an optimal discrete Morse matching on $\Delta^n$ to the subcomplex $\Delta^n_{(n-2)}$ yields an optimal discrete Morse matching on $\Delta^n_{(n-2)}$ with exactly one critical vertex and $n$ critical $(n-2)$-faces.
\end{prop}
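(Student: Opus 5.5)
Let $V$ be an optimal discrete Morse matching on $\Delta^n$ with critical vertex $v$, and let $O$ be the set of pairs of $V$ both of whose simplices lie in $\Delta^n_{(n-2)}$, i.e.\ the pairs $(\sigma^{(k)},\tau^{(k+1)})$ with $k\le n-3$. The plan has three steps: check that $O$ is an acyclic matching on $\Delta^n_{(n-2)}$, count its critical simplices using Lemma \ref{lem: counting pairs}, and compare with the lower bound coming from the homology of $\Delta^n_{(n-2)}$.

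First, $O$ is a matching because it is a subset of $V$. It is acyclic because the modified Hasse diagram $\HH_O(\Delta^n_{(n-2)})$ is an induced subgraph of $\HH_V(\Delta^n)$. Indeed, every edge between two simplices of dimension at most $n-2$ has the same orientation in both diagrams. So a directed cycle in the former would be a directed cycle in the latter, contradicting acyclicity of $V$.

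Next, count the critical simplices of $O$.
\begin{itemize}
\item In dimensions $0,\dots,n-3$, every simplex other than $v$ is matched by $V$ to a simplex of dimension at most $n-2$. Such a pair lies in $O$, so these simplices stay matched and the only critical one is $v$.
\item In dimension $n-2$, a face is critical for $O$ exactly when it was matched upward in $V$ to an $(n-1)$-face. By Lemma \ref{lem: counting pairs} with $k=n-2$, there are exactly $\binom{n}{n-1}=n$ such pairs.
\end{itemize}
Hence $O$ has exactly one critical vertex and exactly $n$ critical $(n-2)$-faces, for a total of $n+1$ critical simplices.

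Finally, we check optimality. By \cite[Theorem 5.2]{zax2012morse}, $\Delta^n_{(n-2)}\simeq\bigvee^n S^{n-2}$, so its Betti numbers are $b_0=1$ and $b_{n-2}=n$. The weak Morse inequalities $m_i\ge b_i$ \cite{F-95} then show that every acyclic matching on $\Delta^n_{(n-2)}$ has at least $n+1$ critical simplices. Since $O$ attains this bound, it is optimal, with the stated critical simplices. The only point needing care is confirming that the pairs of $V$ in dimensions $(n-2,n-1)$ account for all upward-matched $(n-2)$-faces. This is automatic because $\Delta^n_{(n-2)}$ contains no $(n-1)$-faces, so I do not expect a genuine obstacle.
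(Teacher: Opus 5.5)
Your proposal is correct and follows essentially the same route as the paper: drop the pairs involving $(n-1)$- and $n$-faces, use Lemma \ref{lem: counting pairs} to count exactly $n$ newly critical $(n-2)$-faces alongside the surviving critical vertex, and compare with the Morse lower bound coming from $\Delta^n_{(n-2)} \simeq \bigvee^n S^{n-2}$. Your explicit check that the restriction stays acyclic, since its modified Hasse diagram is an induced subgraph of that of $V$, fills in a step the paper leaves implicit.
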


\begin{proof}
By Lemma \ref{lem: counting pairs}, an optimal matching on $\Delta^n$ contains exactly $\binom{n}{n-1} = n$ pairs of the form $(\sigma^{(n-2)}, \tau^{(n-1)})$ and exactly $\binom{n}{n} = 1$ pair of the form $(\tau^{(n-1)}, \sigma^{(n)})$. Removing these $n+1$ pairs from the matching yields a matching on $\Delta^n_{(n-2)}$ in which the $n$ $(n-2)$-faces that were previously matched upward become critical, and the unique critical vertex is preserved. Since $\Delta^n_{(n-2)} \simeq \bigvee^n S^{n-2}$ requires at least $n$ critical $(n-2)$-simplices and one critical vertex in any discrete Morse matching, the restricted matching achieves the Morse lower bound and is therefore optimal.
\end{proof}

Let $\rho$ denote the restriction map of Proposition \ref{prop: restricting delta n} from optimal matchings on $\Delta^n$ to optimal matchings on $\Delta^n_{(n-2)}$. We now prove the main result of this section that $\rho$ is surjective and each fiber has size exactly $n+1$. 

\begin{thm}\label{thm: morse bijection}
The map $\rho$ is surjective with uniform fiber size $n+1$, so
$$
f(n) = (n+1) \cdot \bigl|\text{top-dimensional facets of } \M(\Delta^n_{(n-2)})\bigr|.
$$
\end{thm}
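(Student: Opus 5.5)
The plan is to fix an optimal matching $O$ on $\Delta^n_{(n-2)}$ and show directly that it has exactly $n+1$ extensions to an optimal matching on $\Delta^n$. Surjectivity of $\rho$ and the counting formula then follow at once, since $f(n)$ is the sum of the fiber sizes over all top-dimensional facets of $\M(\Delta^n_{(n-2)})$. By Proposition \ref{prop: restricting delta n} and Lemma \ref{lem: counting pairs}, an optimal matching $V$ with $\rho(V)=O$ is $O$ together with two kinds of pairs. There are $n$ pairs $(u,F)$ with $u\in U$ a critical $(n-2)$-face of $O$ and $F\in\mathcal{F}$ a facet. There is one pair $(F_r,\sigma)$, where $\sigma$ is the $n$-simplex. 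Moreover every facet and every $u\in U$ is used exactly once. All other $(n-2)$-faces are already matched downward in $O$, so only elements of $U$ are available as lower partners of facets.

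The first step is to translate this into graph terms using Lemma \ref{lem: spanning tree}. Let $G=(\mathcal{F},U)$ be the spanning tree of $K_{n+1}$. A pair $(u,F)$ with $u\subset F$ says exactly that the edge $u$ of $G$ is incident to the vertex $F$. So an extension amounts to a choice of root $F_r$, the facet matched with $\sigma$, together with a bijection from $\mathcal{F}\setminus\{F_r\}$ to the edges of $G$ that assigns each vertex an incident edge. For a tree rooted at $F_r$ such a bijection exists and is unique: it sends each non-root vertex to the edge joining it to its parent. I will prove uniqueness by induction on leaves. A non-root leaf has only one incident edge, so that edge is forced; deleting the leaf and its edge leaves a smaller tree with the same root. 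Hence for each of the $n+1$ roots there is at most one candidate extension $V_r$, and distinct roots give distinct matchings because the partner of $\sigma$ differs.

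The main step is to check that each candidate $V_r$ is acyclic. A directed cycle in the modified Hasse diagram must stay between two consecutive dimensions $p,p+1$, because reversed edges lower the dimension by one, unreversed edges raise it by one, and a reversed edge can only be followed by an unreversed one. Cycles in levels $p\le n-3$ are excluded because $O$ is acyclic and those pairs are unchanged. The level $(n-1,n)$ contains a single pair, so it cannot carry a cycle. In level $(n-2,n-1)$, a $V_r$-path leaves an edge $u$ of the tree, goes to the child endpoint $F$ that $u$ is matched with, and then passes to another $(n-2)$-face $u'\subset F$ that is matched upward. Such a $u'$ must lie in $U$ and be incident to $F$ without being its parent edge, so $u'$ is a child edge of $F$. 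Thus every gradient path moves strictly away from the root and cannot close up. Finally, $V_r$ leaves only the critical vertex of $O$ unmatched, so it is optimal.

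Combining the steps gives $|\rho^{-1}(O)|=n+1$ for every $O$, hence surjectivity and $f(n)=(n+1)\cdot|\text{top-dimensional facets of }\M(\Delta^n_{(n-2)})|$. I expect the acyclicity verification to be the main obstacle, specifically justifying cleanly that cycles are confined to consecutive levels and that gradient paths in level $(n-2,n-1)$ move monotonically away from the root. The counting part is routine once Lemma \ref{lem: spanning tree} is available.
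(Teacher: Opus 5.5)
Your proposal is correct and follows the paper's proof essentially step for step: you fix $O$, use Lemma~\ref{lem: spanning tree} to obtain the spanning tree, build one extension per root by matching each non-root facet with its parent edge, prove acyclicity by showing that gradient paths strictly increase depth in the tree, and get uniqueness for each root because the tree admits only the orientation toward the root. The only differences are minor and both valid. You confine cycles to two consecutive levels at the outset, which lets you skip the paper's separate case where a path drops to a non-critical $(n-2)$-face. You also justify uniqueness by an explicit leaf-peeling induction, where the paper simply asserts it.
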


\begin{proof}
Let $O$ be an optimal matching on $\Delta^n_{(n-2)}$ with critical $(n-2)$-faces $U = \{u_1, \ldots, u_n\}$. By Lemma \ref{lem: spanning tree}, $U$ is the edge set of a spanning tree $T$ of $K_{n+1} = (\{F_0, \ldots, F_n\}, \binom{\mathcal{F}}{2})$.

We first extend $O$ to an optimal matching on $\Delta^n$. Choose a facet  $F_r \in \mathcal{F}$ to be the root; the choice of root is where the  factor $n+1$ in the formula originates. Orient every edge of $T$ toward $F_r$. This orientation determines $n$ new matched pairs on $\Delta^n$ by specifying that for each edge $u = F_iF_j$ of $T$ oriented from $F_i$ to $F_j$, match the $(n-2)$-face $u$ with the $(n-1)$-face $F_i$. This leaves the root $F_r$ as the only unmatched facet. Finally, match $F_r$ with the $n$-simplex $\sigma$. Let $P$ denote the resulting matching on $\Delta^n$, which combines $O$ with these $n+1$ new pairs.

We first show that $P$ is acyclic. Suppose by contradiction that the modified Hasse diagram of $P$ contains a directed cycle $C$. Since $O$ is acyclic, $C$ must use at least one newly added upward edge. Now $\sigma$ cannot lie on $C$ since the only upward edge into $\sigma$ is $F_r \to \sigma$, and all edges from $\sigma$ point downward to $(n-1)$-faces $F_i \neq F_r$. Hence any path through $\sigma$ must leave along a downward edge and cannot return so that $C$ lies entirely in dimensions $\leq n - 1$ and uses at least one upward edge $u \to F_i$ with $u \in U$ and $(u, F_i)$ a newly added pair.

For each facet $F \in \mathcal{F}$, let $d(F)$ denote the distance from $F$ to the root $F_r$ in the tree $T$. We claim that every time $C$ traverses a newly added upward edge $u \to F_i$, the next facet visited by $C$ (if any) must have a strictly larger distance from the root. Indeed, suppose $u = F_i F_j$ is matched with $F_i$. By our construction, $u$ is oriented toward the root, meaning $F_j$ is the parent of $F_i$ and $d(F_i) = d(F_j) + 1$. After entering $F_i$ via $u \to F_i$, the cycle $C$ must exit $F_i$ via a downward edge to some $(n-2)$-face $v \in \partial F_i$. We consider two cases: either $v \in U$ or $v \notin U$.

Suppose $v = u'$ for some $u' \in U$ distinct from $u$. Since $C$ must continue, it must travel up a matched edge $u' \to F_k$. By our matching rule, $u'$ is oriented from $F_k$ to its parent. Because $u'$ connects $F_i$ and $F_k$, and $F_j$ is already the unique parent of $F_i$, it must be that $F_i$ is the parent of $F_k$. Thus $d(F_k) = d(F_i) + 1$, meaning the distance from the root has strictly increased.

Now suppose $v \notin U$. Then $v$ is matched in $O$ with an $(n-3)$-face, forcing $C$ into the lower skeleton. To return to the facet layer, $C$ would have to traverse an upward edge from the lower skeleton into some $u'' \in U$. However, the faces in $U$ are critical in $O$ and therefore unmatched in the lower skeleton. Consequently, there are no upward edges entering $U$ from below. Any path entering the lower skeleton is permanently trapped and cannot participate in a cycle involving the facet layer.

In either case, the distance $d$ strictly increases with every new facet visited. Since $T$ is finite, $d$ is bounded above by the maximum depth of the tree, so no such sequence can close into a cycle. Hence $P$ is acyclic.

To see that $P$ is optimal, observe that the matching $P$ has exactly one critical simplex since the $n+1$ new pairs consume exactly the $F_r$, the remaining $n$ facets, all $n$ previously critical $(n-2)$-faces, and $\sigma$. Since $\Delta^n$ is collapsible, a matching with a single critical vertex is optimal so $P \in \rho^{-1}(O)$.

Now to determine the size of each fiber, we first observe that the construction of $P$ depends only on the choice of root $F_r$, and different  roots yield different matchings. This gives $n + 1$ extensions of $O$ to optimal matchings on $\Delta^n$.

Conversely, let $P' \in \rho^{-1}(O)$. Then $\sigma$ is matched in $P'$ with some $(n-1)$-face $F_r$, which we designate the root. Each remaining facet $F_i \neq F_r$ must be matched in $P'$ with some $(n-2)$-face, and since $\Delta^n_{(n-2)}$ is already optimally matched by $O$, the face matched with $F_i$ must be one of the critical faces $u \in U$, specifically one incident to $F_i$ as an edge of $T$. Matching each non-root facet with exactly one of its incident tree edges amounts to orienting every edge of $T$ toward exactly one endpoint so that every non-root vertex receives exactly one incoming arrow. Because $T$ is a tree, the only such orientation directs every edge toward $F_r$. Hence $P'$ arises from our construction for the root $F_r$, and the fiber $\rho^{-1}(O)$ has exactly $n + 1$ elements.

Since this holds for every $O$, the map $\rho$ is surjective with uniform fiber size $n + 1$, and the formula for $f(n)$ follows.
\end{proof}

\begin{rem}
In the $n = 3$ case, we have that $\M(\Delta^3_{(1)})$ has $\frac{f(3)}{4} = 64$ top-dimensional facets. This can be verified directly from the $K_4$ spanning tree count as there are $4^{4-2} = 16$ spanning trees of $K_4$ by Cayley's formula, and each extends to $4$ optimal matchings on $\Delta^3_{(1)}$ via the choice of critical vertex.
\end{rem}

\section{Inclusions and the cone case}

The simplicial complex $\Delta^n$ admits a natural family of subcomplexes in its skeleta and its boundary.  An even more fundamental decomposition is given by $\Delta^n = v*\Delta^{n-1}$ as a cone over $\Delta^{n-1}$.

In this section we develop two tools that exploit this structure. The first is the general result that any subcomplex inclusion $K \subseteq L$ lifts to a simplicial inclusion $\M(K) \hookrightarrow \M(L)$, yielding a long exact sequence on homology. The second tool is specific to cones: when $L = CK$, the lifted inclusion is null-homotopic, and the long exact sequence consequently splits. Both apply to $\Delta^n$, the second via the cone decomposition $\Delta^n = v * \Delta^{n-1}$.

\subsection{Lifting an inclusion}\label{subsec: lifting inclusion}

Recall that a subcomplex $T \subseteq K$ is called \textbf{full} if every simplex of $K$, all of whose vertices lie in $T$, is itself contained in $T$.

\begin{prop}\label{prop: inclusion lifts to Morse}
Let $i \colon K \hookrightarrow L$ be the inclusion of a subcomplex of a simplicial complex $L$. Then the induced map $i_* \colon \M(K) \to \M(L)$ is an injective simplicial map, and its image is a full subcomplex of $\M(L)$.
\end{prop}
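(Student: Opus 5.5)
The plan is to use the Hasse-diagram description of $\M(-)$: vertices of $\M(K)$ are edges of $\HH(K)$, and simplices are acyclic matchings. First, $\HH(K)$ is a subgraph of $\HH(L)$. Since $K$ is a subcomplex, every simplex of $K$ is a simplex of $L$. A covering relation $\sigma\subsetneq\tau$ with $\dim\tau=\dim\sigma+1$ in $K$ is also one in $L$. So $i$ induces an injective map on vertex sets, $(\sigma,\tau)\mapsto(\sigma,\tau)$, from edges of $\HH(K)$ to edges of $\HH(L)$.

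Next I would show this vertex map is simplicial. A matching $W$ on $\HH(K)$ is still a matching on $\HH(L)$, since disjointness of edges does not depend on the ambient graph. For acyclicity, suppose $\HH_W(L)$ has a directed cycle. Every edge of $\HH_W(L)$ points either up, along an unmatched Hasse edge, or down, along a reversed edge of $W$. The cycle must use at least one downward edge, so it meets $K$. The key observation is that a directed path in $\HH_W(L)$ starting in $K$ never leaves $K$:
\begin{itemize}
\item downward steps go from a simplex to a face, and faces of simplices of $K$ lie in $K$;
\item upward steps $\alpha\to\beta$ with $\beta\notin K$ are possible, but once at $\beta\notin K$ the only outgoing edges are upward, because $W$ contains no edges at $\beta$. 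Dimension then strictly increases and the path can never return.
\end{itemize}
Hence any directed cycle through a simplex of $K$ lies entirely in $\HH_W(K)$, contradicting acyclicity of $W$ on $K$. So simplices map to simplices, and injectivity on vertices makes $i_*$ an injective simplicial map, hence an isomorphism onto its image.

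For fullness, let $S$ be a simplex of $\M(L)$, that is, an acyclic matching on $\HH(L)$, all of whose vertices lie in the image. Then every edge of $S$ is a Hasse edge $(\sigma,\tau)$ with $\sigma,\tau\in K$, so $S$ is a matching on $\HH(K)$. A directed cycle in $\HH_S(K)$ would also be a directed cycle in $\HH_S(L)$, since $\HH_S(K)$ is a subgraph of it. So $S$ is acyclic on $K$ and equals $i_*(S)$.

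The only step needing care is the acyclicity transfer from $K$ to $L$. It depends on the fact that $W$ leaves simplices outside $K$ unmatched, so paths escaping $K$ are purely ascending. Everything else is routine bookkeeping.
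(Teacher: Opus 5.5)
Your proposal is correct. It has the same overall structure as the paper's proof: the identity map on Hasse edges, transfer of acyclicity from $K$ to $L$, and fullness because $\HH_S(K)$ is a subgraph of $\HH_S(L)$.

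The two proofs differ only in the acyclicity step.
\begin{itemize}
\item The paper works with closed $V$-paths. In such a path every simplex is either a matched simplex or a codimension-one face of a matched $\beta_i$. The entire path therefore lies in $K$ immediately, and no excursions have to be considered.
\item You work with directed cycles in $\HH_W(L)$. These can a priori leave $K$ along unmatched upward edges, so you need the extra observation that a simplex outside $K$ is unmatched. From such a simplex only ascending edges are available, and a cycle that leaves $K$ can never close up.
\end{itemize}
The paper's route is shorter but leans on the stated equivalence between $V$-paths and directed cycles. Yours stays in the Hasse-diagram language used for the fullness half. It is also essentially the dual of the trapping argument in the proof of Theorem~\ref{thm: null-homotopy}.

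One wording slip: you announce that ``a directed path in $\HH_W(L)$ starting in $K$ never leaves $K$,'' but your own second bullet shows such paths can leave. What you actually prove, and all you need, is that a path which leaves $K$ can never return. Hence any cycle meeting $K$ lies in $\HH_W(K)$.
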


\begin{proof}
A vertex of $\M(K)$ is a primitive gradient vector field $(\sigma, \tau)$ with $\sigma \subseteq \tau$ and $\dim \tau = \dim \sigma + 1$. Define $i_*$ on vertices by $i_*(\sigma, \tau) = (\sigma, \tau)$, viewed as a vertex of $\M(L)$. This is well-defined since $\sigma, \tau \in K \subseteq L$, and clearly injective.

Let $V = \{(\sigma_0, \tau_0), \ldots, (\sigma_m, \tau_m)\}$ be an $m$-simplex of $\M(K)$. Since $V$ is a discrete vector field on $K$, its pairs are mutually disjoint, so $V$ is a valid discrete vector field on $L$. To see that $V$ is acyclic on $L$, observe that any non-trivial closed $V$-path
$$
\alpha_0^{(p)}, \beta_0^{(p+1)}, \alpha_1^{(p)}, \beta_1^{(p+1)}, \ldots, \alpha_{k-1}^{(p)}, \beta_{k-1}^{(p+1)}, \alpha_k^{(p)} = \alpha_0^{(p)}
$$
on $L$ has every $(\alpha_i, \beta_i) \in V$, so all $\alpha_i, \beta_i$ lie in $K$. Furthermore, since $\alpha_{i+1}$ is a face of $\beta_i$ and $K$ is closed under taking faces, every simplex in the path lies in $K$. Such a closed $V$-path on $K$ contradicts acyclicity of $V$ on $K$. Hence $V$ is acyclic on $L$, so $i_*$ sends simplices to simplices.

To show that the image $i_*(\M(K))$ is a full subcomplex, let $T$ be a simplex of $\M(L)$ all of whose vertices are pairs $(\sigma, \tau)$ with $\sigma, \tau \in K$. Because $T$ is a simplex of $\M(L)$, it is a valid matching and its pairs are mutually disjoint. Furthermore, since the Hasse diagram of $T$ on $L$ contains no directed cycles, it contains no directed cycles when restricted to $K$. Thus $T$ is a valid acyclic matching on $K$, and hence it is the image of a simplex in $\M(K)$.
\end{proof}

Proposition \ref{prop: inclusion lifts to Morse} immediately yields a long exact sequence in homology for the pair $(\M(L), \M(K))$ whenever $K \subseteq L$. Applying this to the skeletal filtration
$$
\Delta^n_{(1)} \subseteq \Delta^n_{(2)} \subseteq \cdots \subseteq \Delta^n_{(n-1)} = \partial\Delta^n \subseteq \Delta^n
$$
gives a chain of long exact sequences relating the homology of $\M(\Delta^n)$ to that of its subcomplexes. We illustrate with the pair $(\M(\Delta^3), \M(\partial\Delta^3))$.

\begin{example}\label{ex: LES n=3}
Consider the long exact homology sequence of the pair $(\M(\Delta^3), \M(\partial\Delta^3))$:
$$
\cdots \to H_k(\M(\partial\Delta^3)) \to H_k(\M(\Delta^3)) \to H_k(\M(\Delta^3), \M(\partial\Delta^3)) \to H_{k-1}(\M(\partial\Delta^3)) \to \cdots .
$$
The simplices of the relative complex $\M(\Delta^3)$ relative to $\M(\partial\Delta^3)$ correspond precisely to those discrete vector fields on $\Delta^3$ that involve the unique $3$-simplex. Theorems \ref{thm: h.t. of delta3} and \ref{thm: h.t. of partial delta3} (see below) give $H_k(\M(\partial\Delta^3)) = 0$ for $k \notin \{3, 4\}$ with $H_3 \cong \ZZ^{21}$ and $H_4 \cong \ZZ^{24}$, and $H_k(\M(\Delta^3)) = 0$ for $k \neq 4$ with $H_4 \cong \ZZ^{99}$. A direct computation yields
$$
H_4(\M(\Delta^3), \M(\partial\Delta^3)) \cong \ZZ^{96} \quad\text{and}\quad H_5(\M(\Delta^3), \M(\partial\Delta^3)) = 0,
$$
so we obtain the long exact sequence
$$
0 \to \ZZ^{24} \to \ZZ^{99} \to \ZZ^{96} \to \ZZ^{21} \to 0.
$$
In particular, the induced map $H_4(\M(\partial\Delta^3)) \to H_4(\M(\Delta^3))$ is injective.
\end{example}

The long exact sequence of the skeletal filtration gives constraints on $H_*(\M(\Delta^n))$ but does not, by itself, determine the homotopy type. The next subsection shows that for the cone decomposition $\Delta^n = v * \Delta^{n-1}$, a much stronger statement holds: the inclusion $\M(\Delta^{n-1}) \hookrightarrow \M(\Delta^n)$ is null-homotopic so that the long exact sequence splits.

\subsection{A null-homotopy theorem}\label{subsec: null-homotopy}

The simplicial complex $\Delta^n$ admits a recursive decomposition $\Delta^n = v * \Delta^{n-1}$, expressing the $n$-simplex as a cone over its $(n-1)$-dimensional facet. By Proposition \ref{prop: inclusion lifts to Morse}, the inclusion $\Delta^{n-1} \subseteq \Delta^n$ lifts to a simplicial inclusion $\M(\Delta^{n-1}) \hookrightarrow \M(\Delta^n)$. The main result of this section shows that this inclusion is null-homotopic. More generally, the analogous inclusion $\M(K) \hookrightarrow \M(CK)$ for any cone is null-homotopic.

This is somewhat surprising. The inclusion of the base $K \subseteq CK$ is itself null-homotopic for the trivial reason that $CK$ is contractible, but $\M(CK)$ is generally far from contractible, as verified in even the smallest interesting case $C\Delta^1 = \Delta^2$ (see Example \ref{ex: delta2}), where $\M(\Delta^2) \simeq \bigvee^4 S^1$. So the null-homotopy of $i_*$ does not come from the contractibility of the ambient complex. Rather, it comes from a single, explicit choice of a Hasse edge that is compatible with every matching on $K$, yielding a one-step contiguity.

\begin{thm}\label{thm: null-homotopy}
Let $K$ be a finite, nonempty simplicial complex and $CK = v * K$ its cone with apex $v$. Then the induced inclusion $i_* \colon \M(K) \hookrightarrow \M(CK)$ is null-homotopic.
\end{thm}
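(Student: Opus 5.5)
The plan is to produce a single vertex $e$ of $\M(CK)$ that is compatible with every simplex in the image of $i_*$. Then $i_*$ will be contiguous to the constant simplicial map at $e$, and contiguous simplicial maps are homotopic. Equivalently, the image $i_*(\M(K))$ will lie in the closed star of $e$ in $\M(CK)$. That star is a cone with apex $e$, hence contractible, so $i_*$ factors through a contractible space.

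For the choice of $e$, fix a vertex $w$ of $K$; one exists since $K$ is nonempty. Let $e$ be the Hasse edge $v \to vw$ of $CK$, that is, the primitive gradient vector field $(v, vw)$. Neither $v$ nor $vw$ is a simplex of $K$. Hence for any simplex $V$ of $\M(K)$, regarded via Proposition \ref{prop: inclusion lifts to Morse} as an acyclic matching on $CK$ using only simplices of $K$, the set $V \cup \{(v,vw)\}$ is still a matching on $\HH(CK)$.

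The key step, and the only real obstacle, is acyclicity of $V \cup \{(v, vw)\}$. A non-trivial closed gradient path lives in a single pair of dimensions $(p,p+1)$, and $V$ is already acyclic. So any closed path must use the new pair, which forces $p=0$ and the path to pass through $v$ and then $vw$. Leaving $vw$, it descends to the other vertex $w$. To close up, it must later return to $v$, which requires some pair $(\alpha_i,\beta_i)$ with $\beta_i$ an edge containing $v$. Every edge containing $v$ is of the form $vx$ with $x$ a vertex of $K$, and the only such edge that is matched is $vw$, via the new pair. Thus the path would have to return to $v$ itself before re-entering $vw$, which means it is already closed at $v$. In the Hasse-diagram language, the argument reads: in $\HH_{V\cup\{e\}}(CK)$ the vertex $v$ has a single incoming arrow, namely $vw \to v$, from its matched edge. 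Every other edge $vx$ with $x \neq w$ points from $v$ upward, and $vw$ itself has no incoming arrows except from $2$-simplices $vwx$, which are unmatched. I will check these in-degree and out-degree facts carefully, and in particular that nothing downstream of the path $w \to \cdots$ can ever reach $v$, $vw$, or any simplex containing $v$. For the latter, every pair of $V$ lies in $K$, so descending and matched-ascending moves starting in $K$ stay in $K$; the only edges leaving $K$ point up into cone simplices and never return.

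Having shown that $V \cup \{e\}$ is a simplex of $\M(CK)$ for every simplex $V$ of $\M(K)$, the constant map $c_e$ and $i_*$ satisfy the contiguity condition: $i_*(V) \cup c_e(V)$ is a simplex for all $V$. Hence $i_* \simeq c_e$, and $i_*$ is null-homotopic.
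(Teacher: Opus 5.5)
Your proposal is correct and follows the paper's proof essentially step for step: you use the same Hasse edge $(v,vw)$, the same contiguity between $i_*$ and the constant map at that vertex, and the same acyclicity argument that any cycle must use $(v,vw)$ and can never re-enter $v$, because the only matched edge containing $v$ is $vw$ itself. One minor slip is that your Hasse-diagram paraphrase mixes the two orientation conventions: with matched edges reversed to point downward, as in the paper's definition of the modified Hasse diagram, the only arrow into $vw$ is $w \to vw$, not arrows from the $2$-simplices $vwx$. This does not affect correctness, since your $V$-path argument, which uses $\alpha_{i+1}\neq\alpha_i$ to rule out re-entering $v$ through $vw$, is complete on its own.
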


\begin{proof}
We will exhibit a single explicit Hasse edge that yields a contiguity between $i_*$ and a constant map.

Recall that two simplicial maps $f, g \colon X \to Y$ are \textbf{contiguous} if for every simplex $\sigma$ of $X$, the union $f(\sigma) \cup g(\sigma)$ is a simplex of $Y$ \cite[Section 3.5]{Spanier}.

Fix a vertex $w_0 \in K$ and consider the matched pair $e_0 = (v, vw_0)$ in the Hasse diagram of $CK$. This $e_0$ is a vertex of $\M(CK)$.  Define the constant map $c \colon \M(K) \to \M(CK)$ by $c(\mu) = e_0$ for every simplex $\mu$ of $\M(K)$.

To show $i_*$ and $c$ are contiguous, we show that for every simplex $\mu \in \M(K)$, the union $i_*(\mu) \cup c(\mu) = \mu \cup \{e_0\}$ is a simplex of $\M(CK)$; that is, it is an acyclic matching on the Hasse diagram of $CK$.

We first show that $\mu \cup \{e_0\}$ is a valid matching. The simplices $v$ and $vw_0$ both contain $v$ and so neither lies in $K$. Since $\mu$ uses only faces of $K$, no simplex appears in both $\mu$ and $\{e_0\}$, so the matched pairs of $\mu \cup \{e_0\}$ are mutually disjoint. 

To see that $\mu \cup \{e_0\}$ is acyclic, suppose by way of contradiction that the modified Hasse diagram of $CK$ with respect to $\mu \cup \{e_0\}$ contains a directed cycle $\gamma$. Recall that unmatched edges are directed downward and matched edges upward. Since $\mu$ is acyclic on $K$, $\gamma$ cannot lie entirely on the sub-Hasse diagram of $K$. Thus $\gamma$ must use the new matched edge $e_0$, traversed upward from $v$ to $vw_0$. Now in the modified Hasse diagram, the edge to $v$ is matched and so directed into $vw_0$. Every coface $vw_0w$ of $vw_0$ lies outside $K$ and outside $\{e_0\}$, so the pair $(vw_0, vw_0 w)$ is unmatched and the edge is directed $vw_0 w \to vw_0$ into $vw_0$. The only outgoing edge from $vw_0$ is therefore the unmatched downward edge to $w_0 \in K$, so $\gamma$ must continue from $vw_0$ into $K$. But once a directed path enters $K$ it cannot leave. Indeed, the matched edges of $\mu$ remain in $K$ so that every Hasse edge from a simplex $\sigma \in K$ to a simplex outside $K$ has the form $\sigma \subseteq \sigma \cup \{v\}$, which is unmatched and therefore directed downward from $\sigma \cup \{v\}$ to $\sigma$. Hence $\gamma$ cannot return to $v$, contradicting the assumption that $\gamma$ is a cycle. Thus $\mu \cup \{e_0\}$ is acyclic and is a simplex of $\M(CK)$.

We conclude that $i_*$ and $c$ are contiguous, and hence $i_*$ is homotopic to a constant map.
\end{proof}

The null-homotopy of $i_*$ has immediate homological and homotopical consequences.

\begin{cor}\label{cor:null-homotopy-consequences}
Let $K$ be a finite, nonempty simplicial complex and $CK = v * K$ its cone. Then
\begin{enumerate}
\item The induced map $i_* \colon \tilde{H}_*(\M(K)) \to \tilde{H}_*(\M(CK))$
is the zero map.
\item The cofiber of $i$ splits as
$$
\M(CK) / \M(K) \simeq \M(CK) \vee \Sigma \M(K),
$$
where $\Sigma$ denotes the (reduced) suspension. Consequently,
$$
\tilde{H}_k(\M(CK), \M(K)) \cong \tilde{H}_k(\M(CK)) \oplus \tilde{H}_{k-1}(\M(K)),
$$
which holds over any coefficient group.
\item The long exact sequence of the pair $(\M(CK), \M(K))$ in reduced homology splits into short exact sequences
$$
0 \to \tilde{H}_k(\M(CK)) \to \tilde{H}_k(\M(CK), \M(K)) \to \tilde{H}_{k-1}(\M(K)) \to 0
$$

for all $k$.
\end{enumerate}
\end{cor}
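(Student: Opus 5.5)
All three parts will be derived from Theorem \ref{thm: null-homotopy}, which says $i_* \colon \M(K) \hookrightarrow \M(CK)$ is null-homotopic, together with standard facts about cofiber sequences of CW pairs.

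For (1), a null-homotopic map factors up to homotopy through a point. Hence on reduced homology it factors through $\tilde H_*(\mathrm{pt}) = 0$, and so $i_*$ induces the zero map. We can also see this directly: contiguous simplicial maps induce equal maps on simplicial homology. The constant map $c$ sends every simplex to the vertex $e_0$, so it is zero on reduced homology.

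For (2), first note that $\M(K)$ is a subcomplex of $\M(CK)$ by Proposition \ref{prop: inclusion lifts to Morse}, so $(\M(CK),\M(K))$ is a CW pair. It follows that the quotient $\M(CK)/\M(K)$ is homotopy equivalent to the mapping cone $C_{i_*} = \M(CK) \cup_{i_*} C\M(K)$. The mapping cone depends only on the homotopy class of the attaching map. Since $i_* \simeq c$, we get $C_{i_*} \simeq C_c$. For the constant map to the point $e_0$, the mapping cone is $\M(CK)$ with the cone $C\M(K)$ glued along its base, which is collapsed to $e_0$. This space is $\M(CK) \vee \Sigma\M(K)$, where $\Sigma$ here is the unreduced suspension. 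Because $\M(K)$ is a CW complex, the reduced and unreduced suspensions are homotopy equivalent.

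To justify $C_{i_*}\simeq C_c$, we can use the homotopy $H$ from the contiguity: the mapping cylinder of $H$ gives the equivalence. Alternatively, we can cite the standard lemma that homotopic maps have homotopy equivalent mapping cones. This lemma needs the homotopy extension property, which holds for CW pairs. The homology formula then follows from $\tilde H_k(\M(CK),\M(K)) \cong \tilde H_k(\M(CK)/\M(K))$ for good pairs, together with additivity of reduced homology on wedges and the suspension isomorphism $\tilde H_k(\Sigma X)\cong \tilde H_{k-1}(X)$. All of these hold for any coefficient group.

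For (3), we write out the long exact sequence of the pair. The map $\tilde H_k(\M(K))\to \tilde H_k(\M(CK))$ vanishes in every degree by (1). Exactness then forces $\tilde H_k(\M(CK)) \to \tilde H_k(\M(CK),\M(K))$ to be injective and the connecting map $\tilde H_k(\M(CK),\M(K))\to \tilde H_{k-1}(\M(K))$ to be surjective. This gives the stated short exact sequences. They split because of (2), or more simply because the wedge decomposition provides the section.

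The only real obstacle is the identification in (2) of the mapping cone of a null-homotopic map with the wedge. This is a standard homotopy-theoretic lemma, and I would cite it, for example from Hatcher, rather than reprove it.
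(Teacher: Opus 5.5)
Your proposal is correct and follows essentially the same route as the paper. Part (1) comes from the null-homotopy of Theorem~\ref{thm: null-homotopy}. Part (2) identifies the cofiber with the mapping cone $C_i$, replaces $i$ by the homotopic constant map, and recognizes $C_c \simeq \M(CK)\vee\Sigma\M(K)$. Part (3) feeds the vanishing of $i_*$ into the long exact sequence of the pair.

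Your extra justifications only make explicit what the paper's shorter proof leaves implicit: the CW-pair input for homotopy invariance of mapping cones, reduced versus unreduced suspension, and $H_k(\M(CK),\M(K))\cong\tilde H_k(\M(CK)/\M(K))$ for good pairs.
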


\begin{proof}
A null-homotopic map induces the zero map on reduced homology, giving (1). For (2), because $i$ is a cofibration, the cofiber $\M(CK)/\M(K)$ is homotopy equivalent to the mapping cone $C_i$. Since $i$ is null-homotopic, $C_i$ is homotopy equivalent to the mapping cone of the constant map, which is exactly $\M(CK) \vee \Sigma\M(K)$. The homology splitting follows from $\tilde{H}_k(\M(CK)/\M(K)) \cong \tilde{H}_k(\M(CK)) \oplus \tilde{H}_{k-1}(\M(K))$, since reduced homology distributes over wedge sums. Statement (3) then follows from (1) applied to the long exact sequence of the pair.
\end{proof}

The application to $\Delta^n$ is then immediate.

\begin{cor}\label{cor:simplex-induction}
For all $n \geq 2$, the inclusion $\M(\Delta^{n-1}) \hookrightarrow \M(\Delta^n)$ is null-homotopic, the long exact sequence of the pair
$(\M(\Delta^n), \M(\Delta^{n-1}))$ splits, and
$$
\tilde{H}_k(\M(\Delta^n), \M(\Delta^{n-1})) \cong \tilde{H}_k(\M(\Delta^n)) \oplus \tilde{H}_{k-1}(\M(\Delta^{n-1})).
$$
\end{cor}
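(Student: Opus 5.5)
The corollary is a direct specialization of Theorem \ref{thm: null-homotopy} and Corollary \ref{cor:null-homotopy-consequences}. The plan is to identify $\Delta^n$ with the cone $C\Delta^{n-1} = v * \Delta^{n-1}$ and to check that the inclusion $\M(\Delta^{n-1}) \hookrightarrow \M(\Delta^n)$ in the statement is exactly the map $i_*$ treated there.

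\emph{Step 1: the cone identification.} Write the vertex set of $\Delta^n$ as $\{v_0,\dots,v_n\}$ and set $v = v_n$. Let $\Delta^{n-1}$ be the facet on $\{v_0,\dots,v_{n-1}\}$.

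Every subset of $\{v_0,\dots,v_n\}$ either omits $v$, in which case it is a face of $\Delta^{n-1}$, or has the form $\tau \cup \{v\}$ with $\tau$ a (possibly empty) face of $\Delta^{n-1}$. Hence $\Delta^n = v * \Delta^{n-1} = C\Delta^{n-1}$ as abstract simplicial complexes. For $n \geq 2$ the base $\Delta^{n-1}$ is finite and nonempty (indeed $n\ge 1$ would already suffice), so the hypotheses of Theorem \ref{thm: null-homotopy} hold.

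\emph{Step 2: matching the inclusions.} By Proposition \ref{prop: inclusion lifts to Morse}, the subcomplex inclusion $\Delta^{n-1} \subseteq \Delta^n$ induces the simplicial inclusion $\M(\Delta^{n-1}) \hookrightarrow \M(\Delta^n)$. This is literally the map $i_*\colon \M(K)\hookrightarrow \M(CK)$ of Theorem \ref{thm: null-homotopy} with $K = \Delta^{n-1}$. That theorem therefore gives the null-homotopy. Explicitly, the contiguity uses $e_0 = (v, v w_0)$ for any vertex $w_0$ of $\Delta^{n-1}$.

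\emph{Step 3: the homological consequences.} Part (3) of Corollary \ref{cor:null-homotopy-consequences} gives the splitting of the long exact sequence of the pair $(\M(\Delta^n),\M(\Delta^{n-1}))$ into short exact sequences. Part (2) gives the displayed isomorphism
$$
\tilde H_k(\M(\Delta^n),\M(\Delta^{n-1})) \cong \tilde H_k(\M(\Delta^n)) \oplus \tilde H_{k-1}(\M(\Delta^{n-1})).
$$
This comes from the cofiber equivalence $\M(\Delta^n)/\M(\Delta^{n-1}) \simeq \M(\Delta^n) \vee \Sigma\M(\Delta^{n-1})$.

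There is no real obstacle here. The only points needing care are confirming that the cone structure of $\Delta^n$ is compatible with the inclusion from Proposition \ref{prop: inclusion lifts to Morse}, and noting that the relative groups in the statement are identified with reduced homology of the quotient. The latter holds because $\M(\Delta^{n-1})$ is a nonempty subcomplex, so $(\M(\Delta^n),\M(\Delta^{n-1}))$ is a good pair.
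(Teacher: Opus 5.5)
Your proposal is correct and takes the same approach as the paper, which states that the corollary is an immediate application of Theorem~\ref{thm: null-homotopy} and Corollary~\ref{cor:null-homotopy-consequences} to the cone decomposition $\Delta^n = v * \Delta^{n-1}$ with $K = \Delta^{n-1}$. Your explicit checks spell out details the paper leaves implicit: that the inclusion from Proposition~\ref{prop: inclusion lifts to Morse} is exactly $i_*$, and that $\M(\Delta^{n-1})$ is nonempty for $n \geq 2$.
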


In particular, the inductive splitting reduces the computation of $\tilde{H}_*(\M(\Delta^n))$ to the relative homology $\tilde{H}_*(\M(\Delta^n), \M(\Delta^{n-1}))$. Whether this relative homology admits a tractable closed-form description is an open problem.

Although the framework developed in this section relates $\M(\Delta^n)$ to its subcomplexes, it does not yet determine the homotopy type of $\M(\Delta^n)$ for any specific $n \geq 3$. In the next section we carry out this determination in the first nontrivial case, combining the connectivity bound of Theorem \ref{thm: d-2 connected} with direct homology computations to obtain the homotopy types of $\M(\Delta^3)$, $\M(\partial\Delta^3)$, and $\M_{P}(\Delta^3)$.

\section{Homotopy types for $n = 3$ and computational data}\label{sec:n3}

We now determine the homotopy type of $\M(\Delta^3)$, and in the process those of $\M(\partial\Delta^3)$ and $\M_{P}(\Delta^3)$ as well. The strategy in each case is the same: we use the connectivity bound of Theorem \ref{thm: d-2 connected} to establish simple connectivity, compute the integral homology directly from the boundary matrices, and apply one of the Bravo-Camarena theorems to identify the homotopy type with a wedge of spheres.

\subsection{Computational data and homotopy types}

We begin by recording the computational data on which the arguments depend. The $f$-vectors of $\M(\Delta^3)$, $\M(\partial\Delta^3)$,
$\M_{P}(\Delta^3)$, and $\M_{P}(\partial\Delta^3)$ were computed by exhaustive enumeration of the acyclic matchings on the corresponding Hasse diagrams. From these $f$-vectors we constructed the integer boundary matrices and computed their Smith normal forms to obtain the integral homology groups; see the AI Acknowledgments for details on the software used. The $f$-vectors and Euler characteristics appear in Table \ref{tab:n3data} while the integral homology is shown in Table \ref{tab:n3homology}.

\begin{table}[h]
\centering
\begin{tabular}{l l l}
\toprule
Complex & $f$-vector & $\chi$ \\
\midrule
$\M(\Delta^3)$ & $(28, 300, 1544, 3932, 4632, 2128, 256)$ & $100$ \\
$\M(\partial\Delta^3)$ & $(24, 216, 896, 1692, 1248, 256)$ & $4$ \\
$\M_{P}(\Delta^3)$ & $(28, 300, 1544, 3680, 3672, 1600, 256)$ & $-80$ \\
$\M_{P}(\partial\Delta^3)$ & $(24, 216, 896, 1680, 1152, 256)$ & $-80$ \\
\bottomrule
\end{tabular}
\caption{$f$-vectors and Euler characteristics for $n = 3$.}
\label{tab:n3data}
\end{table}

\begin{table}[h]
\centering
\begin{tabular}{l l}
\toprule
Complex & Integral reduced homology \\
\midrule
$\M(\Delta^3)$ & $\tilde{H}_4 \cong \ZZ^{99}$, all other $\tilde{H}_k = 0$ \\
$\M(\partial\Delta^3)$ & $\tilde{H}_3 \cong \ZZ^{21}$, $\tilde{H}_4 \cong \ZZ^{24}$, all others $= 0$ \\
$\M_{P}(\Delta^3)$ & $\tilde{H}_3 \cong \ZZ^{81}$, all other $\tilde{H}_k = 0$ \\
$\M_{P}(\partial\Delta^3)$ & $\tilde{H}_3 \cong \ZZ^{81}$, all other $\tilde{H}_k = 0$ \\
\bottomrule
\end{tabular}
\caption{Integral homology for $n = 3$. All groups are torsion-free.}
\label{tab:n3homology}
\end{table}

All four complexes have torsion-free homology concentrated in at most two consecutive degrees. Combined with the connectivity bound of Theorem \ref{thm: d-2 connected}, this is enough to determine the homotopy types via the homological Bravo-Camarena theorems. We now state and prove each case in turn.

\begin{thm}\label{thm: h.t. of delta3}  
$\M(\Delta^3) \simeq \bigvee^{99} S^4$.
\end{thm}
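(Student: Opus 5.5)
The plan follows the strategy announced at the start of this section: first establish simple connectivity, then read off the integral homology, and finally invoke Theorem \ref{thm: wedge of spheres homotopy}.

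\emph{Step 1: simple connectivity.} The $1$-skeleton of $\Delta^3$ is $K_4$, so every vertex has degree $3$. Theorem \ref{thm: d-2 connected} with $d=3$ then shows that $\M(\Delta^3)$ is $1$-connected. Being a finite simplicial complex, it is in particular a simply connected CW complex.

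\emph{Step 2: homology.} Using the $f$-vector $(28,300,1544,3932,4632,2128,256)$ from Table \ref{tab:n3data}, we assemble the integer boundary matrices $\partial_k$ for $1\le k\le 6$. We then compute their Smith normal forms. This is the step I expect to be the real obstacle, since it is purely computational: the largest matrices are on the order of $4632\times 3932$ and $2128\times 4632$.

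From the Smith normal forms we read off the ranks and the elementary divisors. The goal is to confirm that every elementary divisor equals $1$, so that there is no torsion. We also want $\operatorname{rank}\ker\partial_k=\operatorname{rank}\operatorname{im}\partial_{k+1}$ for every $k\neq 4$, and that the result gives $\tilde H_4\cong\ZZ^{99}$, as recorded in Table \ref{tab:n3homology}.

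Two consistency checks are available. First, the Euler characteristic is $\chi=100$, so $\tilde\chi=99$. This is consistent with a single nonzero reduced group in the even degree $4$ of rank $99$. Second, Step 1 already forces $\tilde H_0=\tilde H_1=0$, so only degrees $2$ through $6$ need to be checked. In fact $\tilde H_6=0$ amounts to injectivity of $\partial_6$, which is quick to verify.

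\emph{Step 3: conclusion.} With $X=\M(\Delta^3)$ simply connected and its only nonzero reduced homology group equal to $\tilde H_4(X)\cong\ZZ^{99}$, Theorem \ref{thm: wedge of spheres homotopy} applies with $d=4$ and $a=99$. It gives $\M(\Delta^3)\simeq\bigvee^{99}S^4$.

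A possible way to lighten Step 2 is Example \ref{ex: delta2}(b), which suggests collapsing stars of the pairs involving the $3$-simplex. Such collapses could shrink the complex before computing, but I would rely on the direct Smith normal form computation rather than this shortcut.
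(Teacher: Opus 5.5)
Your proposal is correct and matches the paper's proof: $1$-connectivity from Theorem~\ref{thm: d-2 connected} applied to the $3$-regular graph $K_4$, the computed integral homology $\tilde H_4(\M(\Delta^3))\cong\ZZ^{99}$ with all other reduced groups zero (which the paper also cites from Chari--Joswig), and then Theorem~\ref{thm: wedge of spheres homotopy}. Your optional shortcut would not work the way it does in Example~\ref{ex: delta2}(b), because the links of the vertices $(F_i,\sigma)$ are not contractible: if they were, coning them off would give $\M(\Delta^3)\simeq\M(\partial\Delta^3)$, contradicting Theorem~\ref{thm: h.t. of partial delta3}; you rightly do not rely on it.
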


\begin{proof}
Let $K = \Delta^3$. Since $K_{(1)}$ is $3$-regular, Theorem \ref{thm: d-2 connected} gives that $\M(K)$ is $1$-connected. From Table \ref{tab:n3homology}, $\tilde{H}_4(\M(K)) \cong \ZZ^{99}$ and $\tilde{H}_i(\M(K)) = 0$ for all
$i \neq 4$ (a computation also recorded in \cite[p. 47]{CJ-2005}). By Theorem \ref{thm: wedge of spheres homotopy}, $\M(\Delta^3) \simeq \bigvee^{99} S^4$.
\end{proof}

The same pipeline applies to $\M(\partial\Delta^3)$, but here the homology is concentrated in two consecutive degrees rather than one, so we invoke the double-wedge version, Theorem \ref{thm: double wedge of spheres homotopy}.

\begin{thm}\label{thm: h.t. of partial delta3}
$\M(\partial\Delta^3) \simeq \bigvee^{21} S^3 \vee \bigvee^{24} S^4$.
\end{thm}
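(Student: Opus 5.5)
The proof will follow the same pipeline as Theorem \ref{thm: h.t. of delta3}, with the double-wedge result replacing the single-wedge one. There are three steps: establish simple connectivity, read off the integral homology, and check the hypotheses of Theorem \ref{thm: double wedge of spheres homotopy}.

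\emph{Simple connectivity.} Let $K = \partial\Delta^3$. Its $1$-skeleton is $K_{(1)} = \Delta^3_{(1)}$, the complete graph $K_4$, so every vertex has degree $3$. Theorem \ref{thm: d-2 connected} with $d = 3$ then shows that $\M(\partial\Delta^3)$ is $1$-connected. In particular it is simply connected, and it is a finite CW complex since it is a finite simplicial complex.

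\emph{Homology.} From Table \ref{tab:n3homology}, the Smith normal form computation gives $\tilde{H}_3(\M(\partial\Delta^3)) \cong \ZZ^{21}$ and $\tilde{H}_4(\M(\partial\Delta^3)) \cong \ZZ^{24}$, with all other reduced groups zero and no torsion. As a consistency check, the Euler characteristic $\chi = 4$ from Table \ref{tab:n3data} agrees with $1 - 21 + 24 = 4$. The long exact sequence of Example \ref{ex: LES n=3} is also consistent with these groups.

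\emph{Applying Bravo--Camarena.} Take $d = 3$, $k = 1$, $a = 21$ and $b = 24$ in Theorem \ref{thm: double wedge of spheres homotopy}. Then $k = 1$ lies in the admissible set, $d > k$, and $a, b$ are positive. The theorem yields $\M(\partial\Delta^3) \simeq \bigvee^{21} S^3 \vee \bigvee^{24} S^4$.

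The main obstacle is not the formal argument but the reliability of the homology computation. The boundary matrices are up to $1692 \times 1248$, and the torsion-freeness of $\tilde{H}_3$ is essential: torsion would put us outside the Bravo--Camarena hypotheses and allow Moore-space summands. I would cross-check the Smith normal forms over $\mathbb{Q}$ and over $\mathbb{F}_p$ for a few small primes. Agreement of all the Betti numbers rules out torsion for those primes. The Euler characteristic then serves as an independent sanity check on the ranks.
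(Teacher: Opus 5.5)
Your proposal is correct and is essentially the paper's proof: you get simple connectivity from Theorem \ref{thm: d-2 connected} with $d=3$, take the torsion-free groups $\tilde{H}_3 \cong \ZZ^{21}$ and $\tilde{H}_4 \cong \ZZ^{24}$ from the computation, and apply Theorem \ref{thm: double wedge of spheres homotopy} with $d=3$, $k=1$. One caveat on your proposed cross-checks: agreement of Betti numbers over $\mathbb{F}_p$ only excludes $p$-torsion for the primes you test, so it is the integral Smith normal form, as the paper uses, that actually certifies torsion-freeness.
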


\begin{proof}
Let $K = \partial\Delta^3$. Since $K_{(1)}$ is $3$-regular, Theorem \ref{thm: d-2 connected} gives that $\M(K)$ is $1$-connected, hence
simply connected. From Table \ref{tab:n3homology}, we have
$$
\tilde{H}_q(\M(\partial\Delta^3)) =
\begin{cases}
\ZZ^{21} & \text{for } q = 3, \\
\ZZ^{24} & \text{for } q = 4, \\
0 & \text{otherwise.}
\end{cases}
$$
By Theorem \ref{thm: double wedge of spheres homotopy},
$$
\M(\partial\Delta^3) \simeq \bigvee^{21} S^3 \vee \bigvee^{24} S^4.
$$
\end{proof}

For the pure complexes, we use the fact that the $2$-skeleton of $\M_{P}(K)$ coincides with that of $\M(K)$, so simple connectivity
of $\M(K)$ immediately implies the simple connectivity of $\M_{P}(K)$.

\begin{thm}\label{thm: h.t. of pure}
$\M_{P}(\Delta^3) \simeq \M_{P}(\partial\Delta^3)
\simeq \bigvee^{81} S^3$.
\end{thm}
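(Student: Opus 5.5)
The plan follows the same pipeline as Theorems \ref{thm: h.t. of delta3} and \ref{thm: h.t. of partial delta3}: establish simple connectivity, read off the integral homology from Table \ref{tab:n3homology}, and apply Theorem \ref{thm: wedge of spheres homotopy} to each of the two pure complexes separately. Both complexes will then be homotopy equivalent to $\bigvee^{81} S^3$, which gives the stated equivalence between them without constructing any direct map.

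The first step is simple connectivity. For $K=\Delta^3$ or $K=\partial\Delta^3$, the $1$-skeleton $K_{(1)}$ is $3$-regular, so Theorem \ref{thm: d-2 connected} makes $\M(K)$ $1$-connected. To transfer this to $\M_{P}(K)$, I will show that $\M_{P}(K)$ and $\M(K)$ have the same $2$-skeleton. By Table \ref{tab:n3data}, the first three entries of the $f$-vectors agree: $(28,300,1544)$ for $\Delta^3$ and $(24,216,896)$ for $\partial\Delta^3$. Since $\M_{P}(K)\subseteq \M(K)$ and the face counts agree in dimensions $0,1,2$, the $2$-skeleta coincide. One could also argue this conceptually, by showing that every acyclic matching with at most three pairs extends to a maximum acyclic matching; the enumeration already certifies it. Since $\pi_1$ of a simplicial complex depends only on its $2$-skeleton, $\pi_1(\M_{P}(K))\cong\pi_1(\M(K))=0$. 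Connectedness follows in the same way, because the $1$-skeleta also agree.

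The second step is the homology. By Table \ref{tab:n3homology}, obtained from Smith normal forms of the boundary matrices of $\M_{P}(K)$, we have $\tilde H_3(\M_{P}(K))\cong\ZZ^{81}$ and all other reduced groups vanish. The Euler characteristics give a consistency check: $\chi=-80=1-81$ in both cases. Theorem \ref{thm: wedge of spheres homotopy} with $d=3$ and $a=81$ then yields $\M_{P}(K)\simeq\bigvee^{81}S^3$ for each $K$.

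The main obstacle is not topological but the reliance on the machine computation of the homology. Correctness rests on enumerating exactly the maximum acyclic matchings, which means taking the simplices generated by the $256$ top facets in each case, consistent with Proposition \ref{prop: top dim bijection}, and on exact integer Smith normal form. If a conceptual justification of the $2$-skeleton equality were demanded instead of the $f$-vector comparison, I would prove directly that any acyclic matching of size $3$ on $\Delta^3$ or $\partial\Delta^3$ extends to an optimal one. I expect that would require a case analysis on the dimensions of the matched pairs, so I would rely on the tabulated data.
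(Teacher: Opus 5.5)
Your proposal is correct and follows the paper's proof essentially step for step. You use the $f$-vector comparison in Table~\ref{tab:n3data} to get equal $2$-skeleta, so $\pi_1(\M_{P}(K)) \cong \pi_1(\M(K)) = 0$ by Theorem~\ref{thm: d-2 connected}, and then apply Theorem~\ref{thm: wedge of spheres homotopy} to the tabulated $\tilde H_3 \cong \ZZ^{81}$; your Euler characteristic check $\chi = -80 = 1 - 81$ is a small consistency verification that the paper does not spell out.
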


\begin{proof}
Let $K$ denote either $\Delta^3$ or $\partial\Delta^3$. Direct comparison of the $f$-vectors in Table \ref{tab:n3data} shows that $\M(K)$ and $\M_{P}(K)$ have the same number of simplices in dimensions $0$, $1$, and $2$. Since $\M_{P}(K) \subseteq \M(K)$, the $2$-skeletons
of $\M_{P}(K)$ and $\M(K)$ coincide. The fundamental group of a CW complex is determined by its $2$-skeleton, so $\pi_1(\M_{P}(K)) \cong \pi_1(\M(K)) = 0$ by Theorem \ref{thm: d-2 connected}. From Table \ref{tab:n3homology},
$$
\tilde{H}_q(\M_{P}(K)) =
\begin{cases}
\ZZ^{81} & \text{for } q = 3, \\
0 & \text{otherwise.}
\end{cases}
$$
By Theorem \ref{thm: wedge of spheres homotopy}, $\M_{P}(K) \simeq \bigvee^{81} S^3$ in both cases.
\end{proof}

The fact that $\M_{P}(\Delta^3)$ and $\M_{P}(\partial\Delta^3)$ have the same homotopy type despite having very different $f$-vectors suggests that the phenomenon holds in general.

\begin{conj}\label{conj: pure equivalence}
For all $n \geq 2$, $\M_{P}(\Delta^n) \simeq \M_{P}(\partial\Delta^n)$.
\end{conj}

The conjecture holds for $n = 2$ trivially (both sides equal the corresponding non-pure complex by Example \ref{ex: delta2}(c)) and for $n = 3$ by Theorem \ref{thm: h.t. of pure}. For $n \geq 4$ it is open.

\subsection{Computational data for $n = 4$}\label{sec:data}

The complex $\M(\Delta^n)$ grows so rapidly with $n$ that direct computation quickly becomes infeasible. For $n = 3$, $\M(\Delta^3)$ has $12{,}820$ simplices and its homology is computable on a laptop. For $n = 4$, the $f$-vector alone required parallel enumeration on a computing cluster, and the homology is out of reach, as the largest boundary matrix has dimensions approximately $8.76 \times 10^8$ by $7.13 \times 10^8$. For $n = 5$, even the $f$-vector is computationally infeasible. This rapid growth underscores the necessity of the structural results developed in the preceding sections.

We computed the $f$-vector of $\M(\Delta^4)$ by exhaustive enumeration of acyclic matchings on the Hasse diagram of $\Delta^4$. The $f$-vector is
\begin{multline*}
(75,\ 2485,\ 47955,\ 598425,\ 5071367,\ 29844505,\ 122685075, \\
350017175,\ 680808105,\ 876110235,\ 712961065, \\
343320335,\ 88467825,\ 10315975,\ 380125),
\end{multline*}
with Euler characteristic $\chi(\M(\Delta^4)) = 212{,}457$, equivalently reduced Euler characteristic $\widetilde{\chi}(\M(\Delta^4)) = 212{,}456$. The number of top-dimensional facets is $f(4) = 380{,}125$, and the identity $380{,}125 = 5 \times 76{,}025$ confirms Theorem \ref{thm: morse bijection} with $|\text{top-dimensional facets of } \M(\Delta^4_{(2)})| = 76{,}025$.

The upper bound $f(n) \leq (n+1)^{2^{n-1}}$ of Chari and Joswig specializes to $f(4) \leq 5^8 = 390{,}625$. Our computation shows this bound is not tight.

For $n \leq 3$, $\M(\Delta^n)$ has integral homology concentrated in a single degree and is homotopy equivalent to a wedge of spheres of that degree, as $\M(\Delta^2) \simeq \bigvee^4 S^1$ and $\M(\Delta^3) \simeq \bigvee^{99} S^4$. One might hope that $\M(\Delta^n)$ is always a wedge of spheres of a single dimension $d(n)$, with $d(2) = 1$ and $d(3) = 4$. The simplest pattern consistent with these values is $d(n) = (n-1)^2$, which would predict $d(4) = 9$.

The Euler characteristic, however, rules this out. By Theorem \ref{thm: d-2 connected}, $\M(\Delta^4)$ is $2$-connected, so any
single-degree wedge decomposition $\M(\Delta^4) \simeq \bigvee^N S^d$ would require $d \geq 3$, with $\widetilde{\chi} = (-1)^d N$. Since
$\widetilde{\chi}(\M(\Delta^4)) = 212{,}456 > 0$, the degree $d$ must be even, and hence $d \geq 4$. In particular, $d = 9$ (and in general any odd degree) is impossible. The simplest single-degree possibility consistent with the data is $\M(\Delta^4) \simeq \bigvee^{212{,}456} S^4$, but the homology might equally well be spread across multiple even degrees.

Determining the actual homology groups of $\M(\Delta^4)$ would require computing the rank of sparse integer matrices with hundreds of millions of rows and columns, which is beyond current computational reach.

\section{The complex of generalized discrete Morse matchings}\label{sec: GM}

The generalized complex of discrete Morse matchings $\GM(K)$ was introduced in \cite{scoville2020higher} to aid in studying the connectivity of $\M(K)$. Unlike $\M(K)$, $\GM(K)$ allows matchings whose modified Hasse diagrams contain cycles; equivalently, $\GM(K)$ is the simplicial complex of all matchings on the Hasse diagram of $K$ with no acyclicity requirement. In particular, $\M(K) \subseteq \GM(K)$, and the difference between the two encodes the topological cost of the acyclicity condition.

A central question we now wish to consider is: how visible is the distinction between $\Delta^n$ and $\partial\Delta^n$ at the level of
these complexes? For $\M$, the answer is that the distinction is significant; explicitly, by Theorems \ref{thm: h.t. of delta3} and \ref{thm: h.t. of partial delta3}, $\M(\Delta^3) \not\simeq \M(\partial\Delta^3)$. The main observation of this section is that for $\GM$, the answer appears to be the opposite: at least for $n = 3$, the homotopy types coincide. We prove this directly and explore what happens when one tries to extend the argument to general $n$, where a natural strategy via vertex removal breaks down already at $n = 3$, leaving the general case open.

\subsection{Computational results for $n = 3$}\label{subsec: GM data}

\begin{thm}\label{thm: GM n3}
$\GM(\Delta^3) \simeq \GM(\partial\Delta^3) \simeq \bigvee^{39} S^4$.
\end{thm}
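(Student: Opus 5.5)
The approach follows the same pipeline as in Section \ref{sec:n3}: establish simple connectivity, compute integral homology, and apply Theorem \ref{thm: wedge of spheres homotopy}. Since $\GM(K)$ is by definition the matching complex of the Hasse diagram $\HH(K)$, it is a flag complex. Its vertices are the Hasse edges, and a set of vertices spans a simplex precisely when the corresponding edges are pairwise disjoint. This makes enumeration straightforward. We will list all matchings of $\HH(\Delta^3)$ and of $\HH(\partial\Delta^3)$, record the $f$-vectors, assemble the integer boundary matrices, and compute Smith normal forms. The expected outcome is torsion-free reduced homology concentrated in degree $4$ with rank $39$ in both cases. As a sanity check, the Euler characteristic of each $f$-vector should equal $1+39$.

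For simple connectivity, we have $\M(K)\subseteq \GM(K)$, and both complexes have the same vertex set and the same $1$-skeleton, since any two disjoint Hasse edges form an acyclic matching. We will check that the $2$-skeleta also coincide for $K=\Delta^3,\partial\Delta^3$. This amounts to showing that no directed cycle in a modified Hasse diagram can be created by only three matched edges. A closed $V$-path alternating between dimensions $p$ and $p+1$ needs at least three matched pairs, so the thing to verify is whether three pairs can close up. In fact they can: take three edges of a triangle $abc$ matched cyclically as $(a,ab),(b,bc),(c,ca)$. This gives a cycle, so the $2$-skeleta differ.

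We therefore fall back to a different argument: $\pi_1$ is determined by the $2$-skeleton, and $\GM(K)^{(2)}$ is obtained from $\M(K)^{(2)}$ by adding only $2$-simplices. Adding $2$-cells to a simply connected complex with the same $1$-skeleton keeps it simply connected, since it only imposes further relations on a trivial group. Since $\M(K)$ is $1$-connected by Theorem \ref{thm: d-2 connected} (the $1$-skeleta of $\Delta^3$ and $\partial\Delta^3$ are $3$-regular), it follows that $\pi_1(\GM(K))=0$. Alternatively, one could cite the connectivity results for $\GM$ from \cite{scoville2020higher} directly.

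With simple connectivity and the homology computation in hand, Theorem \ref{thm: wedge of spheres homotopy} gives $\GM(\Delta^3)\simeq\bigvee^{39}S^4$ and $\GM(\partial\Delta^3)\simeq\bigvee^{39}S^4$, which yields the stated equivalence. The main obstacle is the homology computation itself, which must be carried out, cross-checked, and shown to be torsion-free. The topological input is routine once the $\pi_1$ argument above is made precise.
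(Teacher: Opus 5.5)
Your proposal is correct and follows essentially the same route as the paper: the paper also uses that $\M(K)$ and $\GM(K)$ share a $1$-skeleton, so that $\pi_1(\M(K)) \to \pi_1(\GM(K))$ is surjective (your ``only $2$-cells are added'' argument is this same fact phrased via van Kampen), then invokes Theorem \ref{thm: d-2 connected} and applies Theorem \ref{thm: wedge of spheres homotopy} to the directly computed homology $\tilde{H}_4 \cong \ZZ^{39}$. Your observation that the $2$-skeleta genuinely differ, via the cyclic triangle matching, is correct and consistent with the paper, which only claims that the $1$-skeleta agree.
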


\begin{proof}
Observe that any two disjoint matched pairs of simplices on the modified Hasse diagram form a matching of length two which cannot contain a directed cycle, as two distinct simplices in a simplicial complex can share at most one face of codimension 1. Hence the $1$-skeletons of $\M(K)$ and $\GM(K)$ coincide for any $K$. Combined with cellular approximation, the inclusion $\M(K) \hookrightarrow \GM(K)$ induces a surjection $\pi_1(\M(K)) \to \pi_1(\GM(K))$. In particular, if $\M(K)$ is simply connected, so is $\GM(K)$.

By Theorem \ref{thm: d-2 connected}, both $\M(\Delta^3)$ and $\M(\partial\Delta^3)$ are simply connected, so $\GM(\Delta^3)$ and
$\GM(\partial\Delta^3)$ are as well. Direct computation of the homology from the boundary matrices yields
$$
\tilde{H}_4(\GM(\Delta^3)) \cong \tilde{H}_4(\GM(\partial\Delta^3)) \cong \ZZ^{39},
$$
with all other reduced homology groups vanishing. By Theorem \ref{thm: wedge of spheres homotopy}, $\GM(\Delta^3) \simeq \GM(\partial\Delta^3) \simeq \bigvee^{39} S^4$.
\end{proof}

The inclusion $\M(\Delta^3) \hookrightarrow \GM(\Delta^3)$ induces a surjection $\ZZ^{99} \to \ZZ^{39}$ on $H_4$. The $60$ classes
that are killed reflect the topological role of the acyclicity condition. These are $4$-dimensional homology cycles of acyclic matchings that bound in $\GM(\Delta^3)$ since the larger complex contains non-acyclic partial matchings that fill them in.

\subsection{Structural decomposition}\label{subsec: GM structure}

We now develop a structural decomposition of $\GM(\Delta^n)$ that mirrors the top-facet bijection for $\M$ of Proposition \ref{prop: top dim bijection}. Let $\sigma$ denote the unique $n$-simplex of $\Delta^n$, and for $i = 0, \ldots, n$ let $e_i = (F_i, \sigma)$ be the Hasse diagram edge which pairs the $i$-th facet with $\sigma$. Since the $e_i$ all share the simplex $\sigma$, at most one $e_i$ can appear in any partial matching, so the vertices $e_0, \ldots, e_n$ of $\GM(\Delta^n)$ have pairwise disjoint stars.

\begin{prop}\label{prop: GM decomposition}
There is a covering
$$
\GM(\Delta^n) = \GM(\partial\Delta^n) \cup \bigcup_{i=0}^{n} (e_i * L_i),
$$
where $L_i \subseteq \GM(\partial\Delta^n)$ is the subcomplex consisting of all matchings on $\partial\Delta^n$ that do not use any Hasse edge incident to $F_i$.
\end{prop}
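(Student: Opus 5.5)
The plan is to verify the stated equality of simplicial complexes by double inclusion, treating $\GM(K)$ as the complex of all matchings on $\HH(K)$. The only extra observation needed is that $\HH(\Delta^n)$ is $\HH(\partial\Delta^n)$ together with the single vertex $\sigma$ and the $n+1$ edges $F_i \to \sigma$. These added edges are exactly the vertices $e_0,\dots,e_n$ of $\GM(\Delta^n)$ that do not already lie in $\GM(\partial\Delta^n)$.

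For the inclusion $\supseteq$, I would first note that every matching on $\HH(\partial\Delta^n)$ is a matching on $\HH(\Delta^n)$. Hence $\GM(\partial\Delta^n)$ is a subcomplex of $\GM(\Delta^n)$, which is the analogue of Proposition \ref{prop: inclusion lifts to Morse} with acyclicity dropped. Next, let $W \in L_i$. By definition $W$ uses no Hasse edge incident to $F_i$. The edge $e_i$ involves only $F_i$ and $\sigma$, and $\sigma$ does not occur in $\HH(\partial\Delta^n)$. So $W \cup \{e_i\}$ is a matching on $\HH(\Delta^n)$, and so is every subset of it. Therefore the join $e_i * L_i$ lies in $\GM(\Delta^n)$. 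I would also check that $L_i$ is a subcomplex, which holds because the condition ``uses no edge incident to $F_i$'' is preserved under taking subsets. This is what makes the join meaningful.

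For the inclusion $\subseteq$, take a simplex $W$ of $\GM(\Delta^n)$, that is, a nonempty matching on $\HH(\Delta^n)$. Since all the $e_j$ share the vertex $\sigma$, $W$ contains at most one of them. If $W$ contains none of them, then every edge of $W$ lies in $\HH(\partial\Delta^n)$, so $W \in \GM(\partial\Delta^n)$. If $W$ contains $e_i$, set $W' = W \setminus \{e_i\}$. Because $W$ is a matching and $F_i$ is already covered by $e_i$, no edge of $W'$ is incident to $F_i$, and no edge of $W'$ involves $\sigma$. So $W'$ is a matching on $\partial\Delta^n$ avoiding $F_i$. If $W'$ is nonempty, $W' \in L_i$ and $W = \{e_i\} \cup W' \in e_i * L_i$. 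If $W'$ is empty, $W$ is the cone vertex $e_i$ itself.

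I do not expect a real obstacle; the proof is bookkeeping. The one point requiring care is the convention on emptiness. Simplices of $\GM$ are nonempty matchings, so the join $e_i * L_i$ must be read as containing the apex $e_i$ together with all $\{e_i\} \cup W'$ for $W' \in L_i$. I would state this convention explicitly, and point out that it is also the same convention used in Example \ref{ex: delta2}(b) for stars. Finally, I would record that the cones $e_i * L_i$ meet $\GM(\partial\Delta^n)$ exactly in $L_i$, and that the cones have pairwise disjoint interiors, as already noted before the statement. This sets up the later Mayer--Vietoris or cofiber arguments.
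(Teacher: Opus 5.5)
Your proposal is correct and follows essentially the same route as the paper: a case split on whether a matching contains one of the edges $e_i$ (at most one, since they all share $\sigma$), together with the converse that $\{e_i\} \cup \nu$ is still a matching for every $\nu \in L_i$. Your additional remarks, namely that $\GM(\partial\Delta^n) \subseteq \GM(\Delta^n)$, that $L_i$ is closed under subsets, and that the apex $e_i$ on its own belongs to $e_i * L_i$, spell out points the paper leaves implicit.
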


\begin{proof}
Let $\mu$ be a simplex of $\GM(\Delta^n)$. If $\mu$ contains no $e_i$, then $\mu$ uses only Hasse edges of $\partial\Delta^n$, so $\mu \in \GM(\partial\Delta^n)$.

Otherwise, $\mu$ contains exactly one edge $e_i$. Write $\mu = \nu \cup \{e_i\}$. Since $e_i$ matches $F_i$ with $\sigma$, no Hasse edge of $\nu$ is incident to $F_i$, and since $\sigma \notin \partial\Delta^n$, all edges of $\nu$ lie in $\partial\Delta^n$. Thus $\nu \in L_i$, and $\mu \in e_i * L_i$.

Conversely, for any $\nu \in L_i$, the union $\{e_i\} \cup \nu$ is a valid partial matching on $\Delta^n$ and so lies in $\GM(\Delta^n)$.
\end{proof}

The covering is not disjoint. Indeed, each cone $e_i * L_i$ intersects $\GM(\partial\Delta^n)$ in $L_i$, and distinct cones overlap in $L_i \cap L_j$. Nonetheless, the decomposition suggests a strategy for the conjecture $\GM(\Delta^n) \simeq \GM(\partial\Delta^n)$: collapse each cone $e_i * L_i$ back to its base $L_i$, removing the apex $e_i$ in the process. 

\begin{cor}\label{cor: GM reduction}
If $L_i$ is collapsible for each $i = 0, \ldots, n$, then $\GM(\Delta^n) \simeq \GM(\partial\Delta^n)$.
\end{cor}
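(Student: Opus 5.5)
We prove Corollary \ref{cor: GM reduction} by removing the cone points $e_0,\dots,e_n$ from $\GM(\Delta^n)$ one at a time, using the decomposition of Proposition \ref{prop: GM decomposition} together with the link criterion of \cite[Lemma 4.2.10]{BarmakThesis} already used in Example \ref{ex: delta2}(b).

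\emph{Step 1: the vertices $e_i$.} The vertices of $\GM(\Delta^n)$ that do not lie in $\GM(\partial\Delta^n)$ are exactly the Hasse edges meeting $\sigma$, namely $e_0,\dots,e_n$. Any simplex of $\GM(\Delta^n)$ outside $\GM(\partial\Delta^n)$ contains one of them. So deleting $e_0,\dots,e_n$ (together with their open stars) from $\GM(\Delta^n)$ leaves precisely $\GM(\partial\Delta^n)$.

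\emph{Step 2: the links.} From the proof of Proposition \ref{prop: GM decomposition}, the simplices containing $e_i$ are exactly $\{e_i\}\cup\nu$ with $\nu\in L_i$. Hence $\lk(e_i)=L_i$ and $\st(e_i)=e_i*L_i$. No two of the $e_i$ lie in a common simplex, since they all share $\sigma$. It follows that after deleting some of the $e_j$, the link of each remaining $e_i$ is still $L_i$: deleting $e_j$ only removes simplices containing $e_j$, and none of these contain $e_i$.

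\emph{Step 3: iterate the collapses.} By hypothesis $L_0$ is collapsible, so by Barmak's lemma $\GM(\Delta^n)\searrow \GM(\Delta^n)\setminus\{e_0\}$. By Step 2, the link of $e_1$ in this smaller complex is still $L_1$, which is collapsible, so we may delete $e_1$ next. Continuing through all $n+1$ vertices gives
$$\GM(\Delta^n)\searrow \GM(\partial\Delta^n),$$
which is in particular a homotopy equivalence.

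The only point needing care is the link computation in Step 2. We must check that the link of $e_i$ is exactly $L_i$, so that every $\nu\in L_i$ is admissible with $e_i$. This is precisely the converse direction in Proposition \ref{prop: GM decomposition}, and it is where the absence of an acyclicity condition is used: in $\GM$ no cycle check is needed when adjoining $e_i$. We must also check that the link is unchanged after earlier deletions, which follows from the pairwise disjoint stars noted before that proposition.
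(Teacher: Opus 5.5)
Your proof is correct and follows the paper's argument: you identify $\lk(e_i)=L_i$ via Proposition \ref{prop: GM decomposition}, apply Barmak's link-collapsibility lemma to each $e_i$ in turn, and use the fact that no simplex contains two of the $e_i$, so earlier deletions leave the remaining links unchanged. Your formulation of that last point (no common simplex, i.e.\ disjoint open stars) is more precise than the paper's ``pairwise disjoint stars,'' since the closed stars $e_i * L_i$ and $e_j * L_j$ do meet, in $L_i\cap L_j$.
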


\begin{proof}
By \cite[Lemma 4.2.10]{BarmakThesis}, if the link of a vertex $v$ in a simplicial complex $K$ is collapsible, then $K$ collapses to the subcomplex obtained by removing every simplex containing $v$. We apply this to the vertices $e_0, \ldots, e_n$ of $\GM(\Delta^n)$. The link of $e_i$ in $\GM(\Delta^n)$ is precisely $L_i$, by Proposition \ref{prop: GM decomposition}. Since the $e_i$ have pairwise disjoint stars, removing them sequentially does not affect the links of the others, so if all $L_i$ are collapsible, the $e_i$ may be removed one by one without changing the homotopy type, collapsing down to $\GM(\partial\Delta^n)$.
\end{proof}

The natural question is now whether each $L_i$ is in fact collapsible. We can identify $L_i$ explicitly. The Hasse diagram edges of $\partial\Delta^n$ that avoid $F_i$ are precisely the edges of the Hasse diagram of the subcomplex of $\partial\Delta^n$ obtained by deleting the facet $F_i$. Since $F_i$ is the unique facet of $\partial\Delta^n$ not containing the vertex $v_i$ opposite to it, this subcomplex equals the star $\st_{\partial\Delta^n}(v_i) = v_i * \lk_{\partial\Delta^n}(v_i)
= v_i * \partial\Delta^{n-1}$. Hence
$$
L_i \cong \GM(v_i * \partial\Delta^{n-1}),
$$
and by symmetry all $L_i$ are isomorphic.

For $n = 2$, $\partial\Delta^1$ is two points, so $v_i * \partial\Delta^1$ is a path of length $2$, and $L_i = \GM(v_i * \partial\Delta^1)$ is a tree with $f$-vector $(4, 3)$, hence collapsible. By Corollary \ref{cor: GM reduction}, $\GM(\Delta^2) \simeq \GM(\partial\Delta^2)$.

For $n = 3$, the situation changes dramatically.

\begin{prop}\label{prop: L_i not collapsible}
For $n = 3$, each $L_i$ has $f$-vector $(21, 162, 570, 924, 612, 116)$ and Euler characteristic $\chi(L_i) = 1$ while the integral reduced homology of $L_i$ is given by
$$
\tilde{H}_3(L_i) \cong \ZZ^2, \quad \tilde{H}_4(L_i) \cong \ZZ^2,
$$
with all other reduced homology vanishing. In particular, $L_i$ is not collapsible, and the hypothesis of Corollary \ref{cor: GM reduction} fails for $n = 3$.
\end{prop}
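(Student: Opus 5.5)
The claim is essentially computational, so the plan is to verify it the same way the paper handles the other $n=3$ data, organized so that every step can be checked. By the discussion preceding the proposition, $L_i \cong \GM(v_i * \partial\Delta^2)$, and by symmetry all four $L_i$ are isomorphic. So it suffices to treat one of them, say $L_0 = \GM(S)$, where $S = v_0 * \partial\Delta^2$ is the cone on a triangle boundary. The complex $S$ has $4$ vertices, $6$ edges and $3$ triangles, so $\HH(S)$ has $12 + 9 = 21$ edges: each edge has two vertices and each triangle three edges. This already gives $f_0(L_i) = 21$, and I will use it as a sanity check.

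\textbf{Step 1: the $f$-vector.} I will enumerate all matchings of the bipartite-by-layer graph $\HH(S)$, with no acyclicity condition, sorted by size. The edges split into two layers: vertex--edge incidences ($12$) and edge--triangle incidences ($9$). A matching is a union of a matching in each layer that uses each edge of $S$ at most once. I can therefore count by conditioning on which edges of $S$ are used in the upper layer. The upper layer is a matching between $3$ triangles and the $6$ edges, where each triangle $v_0ab$ contains the two spokes $v_0a, v_0b$ and the base $ab$. For each upper-layer choice, I count matchings of the graph obtained from the vertex--edge bipartite graph of $S$ by deleting the used edges. The result should reproduce $(21,162,570,924,612,116)$. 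As checks, $f_1 = \binom{21}{2} - (\text{pairs sharing a simplex})$ should equal $162$, and the alternating sum should be $21-162+570-924+612-116 = 1$, giving $\chi(L_i)=1$.

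\textbf{Step 2: homology.} I will build the boundary matrices of $L_0$ (sizes at most $924 \times 570$ and $612 \times 924$) and compute Smith normal forms over $\ZZ$, using the verification software as for Table \ref{tab:n3homology}. The expected outcome is ranks giving $\tilde H_3 \cong \ZZ^2$, $\tilde H_4 \cong \ZZ^2$, no torsion, and all other groups zero. This is consistent with $\tilde\chi = 0 = -2 + 2$. A useful reduction before computing is that $L_0$ contains $\M(S)$ with the same $1$-skeleton, which is the observation made in the proof of Theorem \ref{thm: GM n3}. Also, $\GM(S)$ is the independence complex of the line graph of $\HH(S)$, so standard link/deletion tricks (fold lemmas for independence complexes) might cut the computation down by hand. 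I would try this first for a human-checkable confirmation, removing a vertex whose neighborhood contains another's.

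\textbf{Step 3: conclusion.} A collapsible complex is contractible, so it has vanishing reduced homology. Since $\tilde H_3(L_i) \neq 0$, $L_i$ is not collapsible, and the hypothesis of Corollary \ref{cor: GM reduction} fails. The main obstacle is Step 2: the Euler characteristic alone cannot detect the cancelling classes in degrees $3$ and $4$, so a genuine rank computation (or a careful fold-lemma reduction) is unavoidable.
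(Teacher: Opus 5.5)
Your proposal takes the same approach as the paper, which gives no written argument for this proposition: it rests on machine enumeration of all matchings on $\HH(v_i * \partial\Delta^2)$ followed by Smith normal form computations, with non-collapsibility read off from $\tilde H_3(L_i) \neq 0$ exactly as in your Step 3, and your hand checks $f_0 = 21$, $f_1 = 210 - 48 = 162$ and $\chi = 1$ are correct. One small caution: the fold lemma cannot be applied directly to the line graph of $\HH(S)$, because every simplex of $S$ has Hasse degree at least $3$, so no Hasse edge's neighbourhood is contained in another's; any by-hand reduction would have to begin with link/deletion splittings rather than folds.
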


The Euler characteristic check $\chi = 1$ is consistent with $\tilde\chi = 0$ and the cancellation $-2 + 2 = 0$ in the two nontrivial homology groups.

The failure of Corollary \ref{cor: GM reduction} at $n = 3$ is interesting because the homotopy equivalence $\GM(\Delta^3) \simeq \GM(\partial\Delta^3)$ nonetheless holds by Theorem \ref{thm: GM n3}. The cones $e_i * L_i$ are each individually contractible, but they are glued to $\GM(\partial\Delta^3)$ along $L_i$, which has nontrivial homotopy. Each such gluing alters the homotopy type of $\GM(\partial\Delta^3)$, and the alterations from all $n+1$ cones must somehow cancel out and return the original type.

\begin{conj}\label{conj:GM}
For all $n \geq 2$, $\GM(\Delta^n) \simeq \GM(\partial\Delta^n)$.
\end{conj}

By contrast, the analogous statement for $\M$ is false. By Theorems \ref{thm: h.t. of delta3} and \ref{thm: h.t. of partial delta3},
$$
\M(\Delta^3) \simeq \bigvee^{99} S^4 \quad\text{and}\quad \M(\partial\Delta^3) \simeq \bigvee^{21} S^3 \vee \bigvee^{24} S^4,
$$
so the acyclicity condition genuinely distinguishes $\Delta^n$ from $\partial\Delta^n$ in a way that the matching condition does
not. Understanding this asymmetry will require an approach beyond the vertex-removal strategy of Corollary \ref{cor: GM reduction}.

\section{Concluding remarks}\label{sec: conclusion}

The results of this paper give a partial picture of the homotopy types of $\M(\Delta^n)$, $\M(\partial\Delta^n)$, $\M_{P}(\Delta^n)$, and $\GM(\Delta^n)$. For $n = 3$, the homotopy types of all four complexes are determined explicitly. For general $n$, the structural results constrain these homotopy types but do not determine them.

Two open conjectures emerge naturally. The first, Conjecture \ref{conj: pure equivalence}, asserts that $\M_{P}(\Delta^n) \simeq \M_{P}(\partial\Delta^n)$ for all $n \geq 2$. The second, Conjecture \ref{conj:GM}, asserts the analogous statement for $\GM$. Both are verified for $n = 2$ and $n = 3$, and both fail for the non-pure $\M$ at $n = 3$, so the phenomena they describe are genuinely about the pure or generalized settings rather than artifacts of low dimensions.

The Euler characteristic computation $\widetilde{\chi}(\M(\Delta^4)) = 212{,}456$ rules out the simplest extrapolation from the $n = 3$ case, as $\M(\Delta^n)$ need not be a wedge of spheres in a single odd dimension. However, it leaves open whether $\M(\Delta^4)$ is itself a wedge of spheres in a single even dimension, and what its actual homology is. Resolving this would either require a substantial computational advance or a structural identification of the homotopy type.

Finally, the failure of the vertex-removal strategy for $\GM$ at $n = 3$ indicates that the equivalence $\GM(\Delta^n) \simeq \GM(\partial\Delta^n)$, if true, must arise from some subtler cancellation among the cones $e_i * L_i$. Identifying this cancellation may reveal new structure in the relationship between acyclicity and matchings on Hasse diagrams of simplices.

\section*{AI Acknowledgments}

The author acknowledges use of Anthropic's Claude Opus 4.6 for assistance throughout this project. Claude served as a dialogue partner during the development of proofs, flagged gaps and errors in arguments, helped structure and edit the exposition, and was instrumental in writing and validating the enumeration scripts, Smith normal form computations, and the paper's verification software. The author retains full responsibility for the correctness of all mathematical content; every proof and computation has been independently checked, and the code accompanying this paper (\url{https://github.com/nscoville/morse-complex-verification}) independently reproduces every numerical claim.

\bibliographystyle{amsplain}
\bibliography{MorseComplex}

\end{document}